\newcommand{\R}{\mathbb{R}}
\newcommand{\J}{\mathcal{J}}
\newcommand{\calL}{\mathcal{L}}
\newcommand{\ds}{\displaystyle}
\newcommand{\Id}{\textrm{Id}}
\newcommand{\x}{{\bf x}}
\newcommand{\F}{{\bf F}}
\newtheorem{Theorem}{Theorem}[section]
\newtheorem{Lemma}{Lemma}[section]
\newtheorem{Proposition}{Proposition}[section]
\newtheorem{remark}{Remark}[section]
\newtheorem*{Assumption*}{Assumption}
\newtheorem{Definition}{Definition}[section]
\newtheorem*{problem*}{Problem}
\numberwithin{equation}{section}
\begin{document}

\title{An inverse source problem for hyperbolic equations and the Lipschitz-like convergence of the quasi-reversibility method}

\author{Loc Hoang Nguyen\thanks{
Department of Mathematics and Statistics, 
	University of North Carolina at Charlotte, 
	Charlotte, NC, 28223, 
	{\it lnguye50@uncc.edu}
 }}

\date{}
\maketitle

\begin{abstract}
	We propose in this paper a new  numerical method to solve  an inverse source problem for general hyperbolic equations. 
	This is the problem of reconstructing sources from the lateral  Cauchy data of the wave field on the boundary of a domain. 
	In order to achieve the goal, we derive an equation involving a Volterra integral, whose
solution directly provides the desired solution of the inverse source problem. 
	Due to the presence of such a Volterra integral, this equation is not in a standard form of partial differential equations. 
		We employ the quasi-reversibility method to find its regularized solution. Using Carleman estimates, we show that the obtained regularized solution converges to the exact solution with the Lipschitz-like convergence rate as the measurement noise tends to $0$.   
		This is one of the novelties of this paper since currently, convergence results for the quasi-reversibility method  are only valid for purely differential equations.	
		Numerical tests demonstrate a good reconstruction accuracy.
\end{abstract}

\noindent{\it Key words:} inverse source problem, quasi-reversibility method, regularized solution, Lipschitz stability, Carleman estimates, Volterra integral

\noindent{\it AMS subject classification: 	35L10, 35R30} 

\section{Introduction}

In this paper, we propose a rigorous numerical method to solve
an inverse source problem (ISP) for a general hyperbolic equation. 
We demonstrate that a modified idea of the Bukhgeim-Klibanov method \cite{BukhgeimKlibanov:smd1981} works for the numerical solution of our ISP. 
The majority of the known methods to solve ISPs are based on the optimization
approach. However, the theory of this approach does not guarantee
convergence of regularized solutions to the exact one when the level of the
noise in the data tends to zero. 
Motivated by this limitation, we establish here the Lipschitz-like convergence of regularized
solutions of our ISP to the true ones. 
We verify our theory numerically.

The inverse source problem (ISP) is the problem of determining a source term from external information about solutions of the governing equations.
The ISP has uncountable applications. 
The important fact is that the desired solutions can be used to directly detect the source even when the source is inactive after a certain time. 
We name here some examples about the applications of ISPs. 
In the case that the governing equation is hyperbolic, the ISP addresses ultrasonics imaging, photoacoustic tomography, seismic imaging \cite{Anastasioelal:ip2007, DeHoopTittelfitz:ip2015, Devaney:itsu1983, JiangLiuYamamoto:jde2017, MulkanovaRomanov:ejmca2016, Yamamoto:ip1995}.
 In the case of the parabolic equation, the ISP plays an important role in various applications \cite{Malyshev:jmaa1989}; for e.g., in identifying the pollution sources of a river or a lake, \cite{AndrleBadia:ipse2015, BadiaDuong:ip2000, BadiaDuong:jiip2002, RapElal:IJNHFF2006}, and in the case of elliptic equation, the ISP arises from electroencephalography, biomedical imaging \cite{BadiaDuong:ip2000, BaoLinTriki:cm2011, ChengIsakovLu:jde2016}.
 Due to its real world applications, the ISP was studied intensively. 
   We refer the reader to  \cite{BaoLinTriki:CRM2011, HusseinLesnic:ejbe2014, HusseinLesnic:jem2016, HusseinLesnic:jem2016t, LesnicHusseinJohabsson:jcam2016, 
ImanuvilovIsakovYamamoto:cpam2003, Isakov:bookSpringer2006} and the references therein for more discussion and mathematical results about various versions of ISPs.

The ISP in this paper is the linearization of severely ill-posed and highly
nonlinear coefficient inverse problems for hyperbolic equations; see e.g., 
\cite{LiuTriggiani:SIAMJMatt2011, LiuTriggiani:bookchapter,
LiuTriggiani:dcds2013}. In these references, the questions about
uniqueness and stability of coefficient inverse problems are addressed via
addressing similar questions for ISPs. We briefly discuss the linearization
issue in Section \ref{sec problem statement}. Hence, besides the direct
application of finding the restoring force, the method of this paper has
potential contributions in sonar imaging, geographical exploration, medical
imaging, near-field optical microscopy, nano-optics, etc. 


In their celebrated paper, Bukhgeim and Klibanov \cite%
{BukhgeimKlibanov:smd1981} in 1981 have paved the way, for the first time,
to prove uniqueness theorems for a large class of inverse problems,
including the ISP of this paper. Indeed, in \cite{BukhgeimKlibanov:smd1981} the powerful tool of Carleman
estimates was introduced for the first time in the field of inverse
problems. Since then, many important related uniqueness results were proved
on the basis of the Bukhgeim-Klibanov method. In this regard, we refer to,
e.g. \cite{ImanuvilovYamamoto:ip2001,
ImanuvilovIsakovYamamoto:cpam2003, Isakov:bookSpringer2006, Klibanov:ip1992,
KlibanovTimonov:u2004}. In addition, the book \cite{BellassouedYamamoto:SpKK2017} extends the
Bukhgeim-Klibanov method to the case of inverse problems for some systems of
PDEs. 
Some extensions of the Bukhgeim-Klibanov method allow one not only to
prove uniqueness theorems but also to establish Hölder and Lipschitz
stability estimates for inverse problems.
It is worth mentioning that recent modifications of  the Bukhgeim-Klibanov method are used to find numerical solutions
of nonlinear coefficient inverse problems via the so-called
\textquotedblleft convexification" method, see, e.g. \cite{KlibanovKolesov:cma2018, KlibanovAlex:SIAMjam:2017}. 
 Surveys of this method can be
found in \cite{BeilinaKlibanovBook, Klibanov:anm2015}. Many stability
results for ISPs were proved this way. 
We list here several important works. 
When the source is in the form of the separation of variables the Hölder
stability result was obtained in \cite{Yamamoto:ip1995}. 
In \cite{ImanuvilovYamamoto:ip2001,
ImanuvilovYamamoto:cpde2001,JiangLiuYamamoto:jde2017,LiuTriggiani:SIAMJMatt2011, LiuTriggiani:bookchapter, LiuTriggiani:dcds2013}
some Lipschitz stability estimates for ISPs for hyperbolic PDEs were
obtained.

For the numerical solutions of ISPs, the widely used approach is
optimization. We draw the reader's attention to several important publications \cite{BourgeoisRecoquillay:esaim2018, 
HusseinLesnic:ejbe2014, HusseinLesnic:jem2016, HusseinLesnic:jem2016t,
JiangLiuYamamoto:jde2017, LiuTriggiani:dcds2013, MulkanovaRomanov:ejmca2016}%
, in some of which good numerical results using optimal control were
obtained. In particular, in \cite{JiangLiuYamamoto:jde2017} 
the ISP, which is similar with the one of the current paper, is considered, a
numerical method is proposed and implemented. The numerical method of \cite%
{JiangLiuYamamoto:jde2017} is based on the optimization approach. The
convergence of regularized solutions to the exact one is not proved in \cite%
{JiangLiuYamamoto:jde2017}. 
To contribute to the field, we propose in this paper a  numerical method, which is not difficult to implement, without using the straight forward optimal control approach. 
First, we derive an integro-differential equation involving a Volterra integral together with lateral Cauchy data. 
Next, we apply the quasi-reversibility method to solve that Cauchy problem numerically.

The quasi-reversibility method was first proposed by Lattès and Lions \cite{LattesLions:e1969}
in 1969. Since then it has been studied intensively \cite%
{Becacheelal:AIMS2015, Bourgeois:ip2005, Bourgeois:ip2006,
BourgeoisDarde:ip2010, ClasonKlibanov:sjsc2007, Dadre:ipi2016,
KlibanovKuzhuget:aa2008, Klibanov:jiipp2013}. The application of Carleman
estimates for proofs of convergence of those minimizers was first proposed
in \cite{KlibanovSantosa:SIAMJAM1991} for Laplace's equation. In particular, \cite{KlibanovMalinsky:ip1991} is the first publication where it was proposed to use Carleman estimates to obtain Lipschitz stability of solutions of hyperbolic equations with lateral Cauchy data.
We draw the reader's attention to
the paper \cite{Klibanov:anm2015} that represents a survey of the quasi-reversibility method.
Using a Carleman estimate, we prove Lipschitz-like convergence rate of regularized solutions generated by the quasi-reversibility method to the exact solution of that
Cauchy problem. The convergence of regularized solutions is known for quasi-reversibility method
for partial differential equations without integrals \cite{Klibanov:anm2015}. The current
publication is the first one where this convergence is proven for the case
of an integro-differential equation with a Volterra integral in it. This can be considered as an important contribution of this paper to the quasi-reversibility method.

The paper is organized as follows. 
In the next section, we state the inverse source problem. 
In Section \ref{sec integro differential equation}, we derive an equation involving a Volterra integral leading to our reconstruction method. In Section \ref{sec QR}, we discuss about the quasi-reversibility method. Section \ref{sec regu} is to prove the Lipschitz stability result for the quasi-reversibility method. 
Then, in Section \ref{sec num}, we present the implementation and numerical examples. Section \ref{sec rem} is for concluding remarks.

\section{The problem statement}\label{sec problem statement}

The ISP we solve in this paper is stated as follows.
Let $d \geq 2$ be the spatial dimension. Let $a \in C^{[(d + 1)/2] + 3}(\R^d)$, $B \in C^{[(d + 1)/2] + 3}(\R^d, \R^d)$ and $c \in C^{[(d + 1)/2] + 3}(\R^d)$ be known coefficients where $[s]$ is the integer part of $s$ for all $s \in \R$. Assume that all functions $c(\x) - 1, a$ and $B$ are compactly supported.

Consider the following problem
\begin{equation}
	\left\{
		\begin{array}{rcll}
			c(\x) u_{tt} - \Delta u &=& a(\x)u +  B(\x) \cdot \nabla u + p(\x) h(\x, t) &(\x, t) \in \R^d \times (0, \infty),\\
			u(\x, 0) &=& f(\x) &\x \in \R^d,\\
			u_t(\x, 0) &=& g(\x) &\x \in \R^d
		\end{array}
	\right.
	\label{hyper eqn}
\end{equation}
where $f$ and $g$ are functions in $H^{[(d + 1)/2 + 5]}(\R^d)$ whose supports are compact. The function $h: \R^d \times [0, \infty)$ is such that $h(\cdot, t) \in C^{[(d + 1)/2] + 1}(\R^d)$ for all $t > 0$ and $h(\x, \cdot) \in C^3([0, \infty)$ for all $\x \in \R^d.$
According to \cite{ImanuvilovYamamoto:ip2001, ImanuvilovYamamoto:cpde2001}, the function $p(\x)$ corresponds to a restoring force. The conditions imposed on $c, a, B, p$ and $h$ above guarantee that the solution $u$ of \eqref{hyper eqn} exists uniquely and belongs to $C^4(\R^d)$, see \cite[Proposition 4.1]{Ladyzhenskaya:sv1985} and \cite[Theorem 2.2]{Klibanov:ip2013}. These technical conditions, however, are not concerned in numerical studies.

Let $T > 0$ be large enough and let $\Omega$ be a piecewise smooth bounded domain of $\R^d$.
We are interested in the problem below. 
\begin{problem*}[Inverse Source Problem (ISP)]
	Fix $T > 0$ and let $u$ be the solution of \eqref{hyper eqn}.
	Assume that $|h(\x, 0)| > 0$ for all $\x \in \Omega.$
	 Determine the function $p(\x)$, $\x \in \Omega$, from the boundary measurements of  the following lateral Cauchy data
	 \begin{equation}
	 	F(\x, t) = u(\x, t) \mbox{ and } G(\x, t) = \partial_{\nu} u(\x, t) \quad (\x, t) \in \partial \Omega \times [0, T].
		\label{data}
	 \end{equation}
\end{problem*}

As mentioned in the Introduction section, the uniqueness of this ISP was first proved by Bukhgeim and Klibanov in [13], also see \cite{BeilinaKlibanovBook, BellassouedYamamoto:Hindaw2006, ImanuvilovYamamoto:ip2001, KlibanovTimonov:u2004, Klibanov:jiipp2013, LiuTriggiani:SIAMJMatt2011, LiuTriggiani:bookchapter}.
Furthermore, the uniqueness for the ISP was established when the data $F(\x, t)$ and $G(\x, t)$ are measured only on a part of $\partial \Omega$ in \cite{BellassouedYamamoto:Hindaw2006} and in a subsomain of $\Omega$ in \cite{ImanuvilovYamamoto:ip2001}.

We would like to roughly show that ISP above serves as an important step in solving coefficient inverse problems, which are well-known to be severely ill-posed and highly nonlinear. 
Assume that we want to reconstruct the function $\mathfrak{c}(\x) \geq 1$ from the measurement of $\mathfrak u$ and $\partial_{\nu} \mathfrak u$ on $\partial \Omega \times [0, T]$ where $\mathfrak u$ is the solution to the following problem
\begin{equation}
	\left\{
		\begin{array}{rcll}
			\mathfrak{c} \mathfrak u_{tt}(\x, t) &=& \Delta \mathfrak u(\x, t) &(\x, t) \in\R^d \times [0, T],\\
			\mathfrak u(\x, 0) &=& \mathfrak f(\x) &\x \in \R^d,\\
			\mathfrak u_t(\x, 0) &=& \mathfrak g(\x) &\x \in \R^d
		\end{array}
	\right.
\label{nonlinear}
\end{equation}
where $\mathfrak f(\x)$ and $\mathfrak g(\x)$ are known as the initial value and velocity, respectively, of the wave.
The function $\mathfrak f(\x)$ is supposed to satisfy the condition $\Delta \mathfrak f$ is non-zero everywhere in the closure of a domain $\Omega$. 
Assume that an initial guess for $\mathfrak c$ is available and denoted by $\mathfrak c_0$.
Let $\mathfrak u_0$ denote the solution of \eqref{nonlinear} when $\mathfrak c(\x)$ is replaced by the function $\mathfrak c_0(\x)$. 
Write 
\[
	\mathfrak v(\x, t) = \mathfrak u(\x, t) - \mathfrak u_0(\x, t).
\]
It is not hard to verify that
\[
	\left\{
		\begin{array}{rcll}
			\mathfrak c_0(\x) \mathfrak v_{tt}(\x, t) &=& \Delta \mathfrak v(\x, t) + (\mathfrak c_0(\x) - \mathfrak \mathfrak c(\x))\mathfrak u_{tt}(\x, t) &(\x, t) \in \R^d \times [0, T],\\
			\mathfrak v(\x, 0) &=& 0 &\x \in \R^d,\\
			\mathfrak v_t(\x, 0) &=& 0 &\x \in \R^d.
		\end{array}
	\right.
\]
Since $\mathfrak c$ is a small perturbation of $\mathfrak c_0$, roughly speaking, we can replace the function $\mathfrak u_{tt}(\x, t)$ in the problem above by $(\mathfrak u_0)_{tt}(\x, t)$ to obtain
\begin{equation*}
	\left\{
		\begin{array}{rcll}
			\mathfrak c_0(\x) \mathfrak v_{tt}(\x, t) &=& \Delta \mathfrak v(\x, t) + \mathfrak p(\x) \mathfrak h(\x, t) &(\x, t) \in \R^d \times [0, T],\\
			\mathfrak v(\x, 0) &=& 0 &\x \in \R^d,\\
			\mathfrak v_t(\x, 0) &=& 0 &\x \in \R^d
		\end{array}
	\right.
\end{equation*}
where $\mathfrak p(\x) = \mathfrak c_0(\x) - \mathfrak c(\x)$ and $\mathfrak h(\x, t) = (\mathfrak u_0)_{tt}(\x, t)$. Note that 
\[
	\mathfrak h(\x, 0) = (\mathfrak u_0)_{tt}(\x, 0) = \Delta \mathfrak u_0(\x, 0) = \Delta \mathfrak f(\x),
\]
which is non-zero everywhere in $\overline \Omega$ by our assumption.
The boundary values of $\mathfrak v(\x, t)|_{\partial \Omega \times [0, T]}$ and $\partial_{\nu} \mathfrak v(\x, t)|_{\partial \Omega \times [0, T]}$ are computed from the knowledge of $\mathfrak u_0$ and $\mathfrak u$.
 The problem of finding $\mathfrak p(\x) = \mathfrak c_0(\x) - \mathfrak c(x)$ to obtain $\mathfrak c(\x)$ is a particular case of our ISP. 
This is, actually, the linearization approach to enhance the accuracy for the solution of coefficient inverse problem.

In the next section, we will derive an integro-differential equation that leads to our numerical method.

\section{A Volterra integro-differential equation} \label{sec integro differential equation}

This section aims to establish an equation whose solution directly yields the solution of the ISP. 
Define
\begin{equation}
	v(\x, t) = u_t(\x, t) \quad (\x, t) \in \Omega \times [0, T].
	\label{2.1}
\end{equation}
It follows from \eqref{hyper eqn} that
\begin{equation}
	 c(\x) v_{tt} - \Delta v = a(\x) v +  B(\x) \cdot \nabla v  + p(\x) h_t(\x, t) \quad (\x, t) \in \Omega \times [0, T].
	 \label{v eqn}
\end{equation}
The function $v$ satisfies the initial conditions
\begin{align}
	&v(\x, 0) = u_t(\x, 0) = g(\x), 
	\label{2.3}
	\\
	&c(\x) v_t(\x, 0) = c(\x)u_{tt}(\x, 0) = \Delta f(\x) + a(\x) f(\x) + B(\x) \cdot \nabla f(\x)  + p(\x) h(\x, 0)
	\label{2.4}
\end{align}
for $\x$ in $\Omega.$
Let $\tilde h: \Omega \times [0, \infty)$ be a smooth function satisfying 
\begin{equation}
	\tilde h(\x, 0) = h(\x, 0), \quad \tilde h_t(\x, 0) = h_t(\x, 0),  \quad  \tilde h(\x, t) \not = 0 
	\label{2.55}
\end{equation}
for all $\x \in \Omega$ and $t \in [0, T].$ An example for such a function is 
\[
	\tilde h(\x, t) = h(\x, 0) \exp(t h_t(\x, 0)/ h(\x, 0)).
\]
Define the function 
\begin{equation}
	w(\x, t) = \frac{v_t(\x, t)}{\tilde h(\x, t)}, \mbox{ so that }  v_t(\x, t) = \tilde h(\x, t) w(\x, t)\quad (\x, t) \in \Omega \times [0, T].
	\label{2.5}
\end{equation}
A straight forward calculation yields 
\begin{multline*}
	c(\x) w_{tt} - \Delta w 
	= 
	 \frac{(c(\x)v_{tt} - \Delta v)_t}{\tilde h(\x, t)}  
	+ c(\x)  \left(\frac{1}{\tilde h(\x, t)}\right)_{tt} v_t
	\\
	+ 2 c(\x)  \left(\frac{1}{\tilde h(\x, t)}\right)_t v_{tt}
	 - \Delta \left(\frac{1}{\tilde h(\x, t)}\right)v_t
	 - 2\nabla \left(\frac{1}{\tilde h(\x, t)}\right) \nabla v_t.
	\end{multline*}
Therefore, by \eqref{v eqn} and \eqref{2.5}, we have
\begin{multline}
	c(\x) w_{tt} - \Delta w 
	=  \frac{a(\x) \tilde h(\x, t) w + B(\x)\cdot \nabla (\tilde h(\x, t) w) + p(\x) h_{tt}(\x, t)} {\tilde h(\x, t)}
	 + c(\x) \left(\frac{1}{\tilde h(\x, t)}\right)_{tt} \tilde h(\x, t) w
	\\
	 + 2 c(\x) \left(\frac{1}{\tilde h(\x, t)}\right)_t (\tilde h(\x, t) w)_t 
	 - \Delta \left(\frac{1}{\tilde h(\x, t)}\right) \tilde h(\x, t) w
	 - 2\nabla \left(\frac{1}{\tilde h(\x, t)}\right) \nabla (\tilde h(\x, t) w ). 
	\label{2.6}
\end{multline}
At the time $t = 0,$  by \eqref{v eqn} and \eqref{2.4},
\begin{align*}
	c(\x)w(\x, 0) &= \frac{c(\x) v_t(\x, 0)}{\tilde h(\x, 0)} \nonumber
	\\
	&= \frac{\Delta f(\x) + a(\x) f(\x) + B(\x) \cdot \nabla f(\x) + p(\x) h(\x, 0)}{\tilde h(\x, 0)}.
\end{align*}
It follows from \eqref{2.55} that
\begin{equation}
	p(\x) = c(\x) w(\x, 0) - \frac{ \Delta f(\x) + a(\x) f(\x) + B(\x) \cdot \nabla f(\x)}{h(\x, 0)}. 
		\label{2.7}
\end{equation}
Since
\[
	w(\x, 0) = w(\x, t) - \int_0^t w_t(\x, \tau) d\tau,
\]
we can rewrite \eqref{2.7} as
\begin{equation*}
	p(\x) = c(\x) w(\x, t) - c(\x) \int_0^t w_t(\x, \tau) d\tau - \frac{ \Delta f(\x) + a(\x) f(\x) + B(\x) \cdot \nabla f(\x)}{h(\x, 0)}.
\end{equation*}
Plugging this into \eqref{2.6}, we derive an equation for $w$ 
\begin{equation}
	c w_{tt} - \Delta w = L(w, \nabla w, w_t) - \frac{c(\x)  h_{tt}(\x, t)}{\tilde h(\x, t)} \int_0^t w_t(\x, \tau) d\tau + \mathcal F(\x, t)
\end{equation}
where
\begin{align*}
	L(w, \nabla w, w_t) &=   \frac{a(\x) \tilde h(\x, t) w + B(\x)\cdot \nabla (\tilde h(\x, t) w) } {\tilde h(\x, t)}
	 + c(\x) \left(\left(\frac{1}{\tilde h(\x, t)}\right)_{tt} \tilde h(\x, t) + \frac{h_{tt}(\x, t)}{\tilde h(\x, t)}\right) w 
	 \nonumber\\
	&\quad
	 + 2 c(\x) \left(\frac{1}{\tilde h(\x, t)}\right)_t (\tilde h(\x, t) w)_t 
	 - \Delta \left(\frac{1}{\tilde h(\x, t)}\right) \tilde h(\x, t) w
	 - 2\nabla \left(\frac{1}{\tilde h(\x, t)}\right) \nabla (\tilde h(\x, t) w  ) 
\end{align*}
and
\begin{equation}
	\mathcal F(\x, t) = - \frac{h_{tt}(\x, t)( \Delta f(\x) + a(\x) f(\x) + B(\x) \nabla f(\x)}{h(\x, 0) \tilde h(\x, t)}.
	\label{2.12}
\end{equation}

We are now in the position to derive some boundary and initial constraints for the function $w$. 
It follows from \eqref{v eqn}, \eqref{2.3}, \eqref{2.4} and \eqref{2.5}, for all $\x \in \Omega$,
\begin{align*}
	w_t(\x, 0) &= \frac{v_{tt}(\x, 0)}{\tilde h(\x, 0)} - \frac{v_t(\x, 0) \tilde h_t(\x, 0)}{\tilde h^2(\x, 0)}
	\\
	&= \frac{\Delta g(\x) + a(\x) g(\x) + B(\x) \cdot \nabla g(\x)  + p(\x) h_t(\x, 0)}{c(\x)\tilde h(\x, 0)} 
	\\ 
	& \quad -\frac{[\Delta f(\x) + a(\x) f(\x) +B(\x) \cdot \nabla f(\x) + p(\x) h(\x, 0)] \tilde h_t(\x, 0)}{c(\x)\tilde h^2(\x, 0)},
\end{align*} 
which is simplified as
\begin{equation*}
	w_t(\x, 0) = \frac{(\Delta g(\x) + B(\x) \cdot \nabla g(\x)) h(\x, 0) - (\Delta f(\x) + B(\x) \cdot \nabla f(\x))}
	{c(\x) h^2(\x, 0)}.
\end{equation*}
On the other hand, by \eqref{data}, \eqref{2.1} and \eqref{2.5}, we can find the boundary data for the function $w$:
\begin{equation}
	w(\x, t) = \frac{F_{tt}(\x, t)}{\tilde h(\x, t)} \mbox{ and } \partial_{\nu} w(\x, t) =  \frac{G_{tt}(\x, t) \tilde h(\x, t) - \partial_{\nu} \tilde h(\x, t) F_{tt}(\x, t)}{\tilde h^2(\x, t)} \quad (\x, t) \in \partial \Omega \times [0, T].
	\label{boundary conditions}
\end{equation}	

The arguments above are summarized as the following proposition. 
\begin{Proposition}  
Define $\tilde h$ as in \eqref{2.55} and $\mathcal F$ as in \eqref{2.12}. Let $u$ be the solution of the hyperbolic problem \eqref{hyper eqn}.
Then the function 
\begin{equation}
	w(\x, t) = \frac{u_{tt}(\x, t)}{\tilde h(\x, t)} \quad \mbox{for all }(\x, t) \in \Omega \times [0, T]
\end{equation}
 satisfies
 \begin{equation}
	 \calL w =  \mathcal F(\x, t) \quad (\x, t) \in \Omega \times [0, T]
	 \label{problem for w}
 \end{equation}
 where 
\begin{equation}
	\mathcal L \phi = c(\x) \phi_{tt} - \Delta \phi - \ds   L(\phi, \nabla \phi, \phi_t) + \frac{c(\x)  h_{tt}(\x, t)}{\tilde h(\x, t)} \int_0^t \phi_t(\x, \tau) d\tau
	\label{calL}
\end{equation}
for all functions $\phi.$ Furthermore,
\begin{equation}
	w_t(\x, 0) = \Psi(\x) 
	\label{initial vel}
\end{equation}
where 
\begin{equation}
	\Psi(\x) = \frac{(\Delta g(\x) + B(\x) \cdot \nabla g(\x)) h(\x, 0) - (\Delta f(\x) + B(\x) \cdot \nabla f(\x))}
	{c(\x) h^2(\x, 0)}
	\label{2.19}
\end{equation}
for all $\x \in \Omega$ and
\begin{equation}
	w(\x, t) = \zeta(\x, t) \mbox{ and } \partial_{\nu} w(\x, t) = \xi(\x, t) \quad \mbox{for all }  (\x, t) \in \partial \Omega \times [0, T]
	\label{boundary}
\end{equation}
where
\begin{equation}
	\zeta(\x, t) = \frac{F_{tt}(\x, t)}{\tilde h(\x, t)}
	\mbox{ and }
	\xi(\x, t) = \frac{G_{tt}(\x, t) \tilde h(\x, t) - \partial_{\nu} \tilde h(\x, t) F_{tt}(\x, t)}{\tilde h^2(\x, t)}.
	\label{zetaxi}
\end{equation}
\label{prop 2.1}
\end{Proposition}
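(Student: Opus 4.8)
The plan is to verify the three assertions of the proposition --- the integro-differential equation \eqref{problem for w}, the initial velocity identity \eqref{initial vel}, and the lateral Cauchy data \eqref{boundary} --- by assembling, in order, the chain of substitutions indicated in the text preceding the statement. Under the stated smoothness hypotheses on $a$, $B$, $c$, $p$, $h$, $f$, $g$ one has $u \in C^4(\R^d)$, so that $w = u_{tt}/\tilde h$ (equivalently $w = v_t/\tilde h$ with $v = u_t$ as in \eqref{2.1} and \eqref{2.5}) is $C^2$ on $\overline\Omega \times [0,T]$ and all derivatives appearing below are legitimate; $\tilde h$ is nonvanishing by \eqref{2.55}, so the quotients are well defined.

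First I would differentiate \eqref{hyper eqn} in $t$ to get \eqref{v eqn} for $v$, and read off \eqref{2.3}--\eqref{2.4} by evaluating \eqref{hyper eqn} and its time derivative at $t=0$. Applying the product and quotient rules to $c w_{tt} - \Delta w$ with $w = v_t/\tilde h$ yields \eqref{2.6}. Evaluating the precursor of \eqref{2.6} at $t=0$ and using $\tilde h(\x,0) = h(\x,0)$ from \eqref{2.55} gives the expression \eqref{2.7} for $p(\x)$ in terms of $w(\x,0)$; writing $w(\x,0) = w(\x,t) - \int_0^t w_t(\x,\tau)\,d\tau$ and substituting back into \eqref{2.6}, then grouping terms, produces exactly $\calL w = \mathcal F$ with $\calL$ as in \eqref{calL} and $\mathcal F$ as in \eqref{2.12}: the Volterra term is the contribution of $-c(\x)h_{tt}/\tilde h$ multiplied by the integral, $\mathcal F$ absorbs the $p$-free part of \eqref{2.7}, and $L$ collects the remaining lower-order terms. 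The key point making this bookkeeping work is that the elimination of the unknown $p$ is consistent only because $\tilde h(\x,0) = h(\x,0)$.

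For \eqref{initial vel} I would compute $w_t(\x,0) = v_{tt}(\x,0)/\tilde h(\x,0) - v_t(\x,0)\,\tilde h_t(\x,0)/\tilde h^2(\x,0)$, insert $v_{tt}(\x,0)$ from \eqref{v eqn} at $t=0$, $v_t(\x,0)$ from \eqref{2.4}, and $v(\x,0)=g(\x)$; using both identities $\tilde h(\x,0)=h(\x,0)$ and $\tilde h_t(\x,0)=h_t(\x,0)$ of \eqref{2.55}, the two terms carrying $p(\x)$ cancel, leaving $\Psi$ in \eqref{2.19}. For \eqref{boundary}, the lateral Cauchy data \eqref{data} give $u=F$ and $\partial_\nu u=G$ on $\partial\Omega\times[0,T]$, hence $u_{tt}=F_{tt}$ and $\partial_\nu u_{tt}=G_{tt}$ there, and the definition $w=u_{tt}/\tilde h$ together with the quotient rule for $\partial_\nu(u_{tt}/\tilde h)$ gives $\zeta$ and $\xi$ of \eqref{zetaxi}.

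I do not expect a conceptual obstacle here: the proposition is essentially a packaging of the derivation above. The points deserving care are (i) that the cancellation of the $p$-terms in $w_t(\x,0)$ genuinely uses \emph{both} conditions in \eqref{2.55}, and (ii) that after the Volterra substitution the operator $\calL$ truly closes --- no residual dependence on the unknown $p$ survives --- which again rests on $\tilde h(\x,0)=h(\x,0)$. The only real risk is an arithmetic slip in expanding $c w_{tt}-\Delta w$, so I would verify that expansion termwise against \eqref{2.6}.
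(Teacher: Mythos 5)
Your proposal is correct and follows essentially the same route as the paper: the paper's ``proof'' of Proposition \ref{prop 2.1} is precisely the derivation preceding it in Section \ref{sec integro differential equation} (differentiate \eqref{hyper eqn} in $t$, set $w=v_t/\tilde h$, eliminate $p$ by evaluating at $t=0$ and writing $w(\x,0)=w(\x,t)-\int_0^t w_t\,d\tau$, then obtain \eqref{initial vel} from the cancellation of the $p$-terms via both conditions in \eqref{2.55} and \eqref{boundary} from the lateral Cauchy data), and you reproduce exactly this chain, including the observations about where $\tilde h(\x,0)=h(\x,0)$ and $\tilde h_t(\x,0)=h_t(\x,0)$ are used.
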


\begin{remark}
Our method to find the solution of the ISP is based on a numerical method to solve \eqref{problem for w}, \eqref{initial vel} and \eqref{boundary} for a function $w_{\alpha}$. The knowledge of $w_{\alpha}$ directly yields that of $p(\x)$ via \eqref{2.7}. 
	 Involving a Volterra integral, equation \eqref{problem for w} is not a standard partial differential equation. A theoretical method to solve it is not yet available. We solve it numerically by the quasi-reversibility method.
\end{remark}

\begin{remark}
From now on, without loss of generality, we consider the functions $\zeta(\x, t)$ and $\xi(\x, t)$, $(\x, t) \in \partial \Omega \times [0, T]$, as available data. They can be computed directly in terms of the measured data $F(\x, t)$ and $G(\x, t)$ via \eqref{zetaxi}.
\label{rem data}
\end{remark}

Throughout the paper, we consider the case when the given data $F$ and $G$ are noisy.
We explain in Section \ref{sec differentiate} how  to differentiate them.
 We require that those functions are admissible in the following sense.

\begin{Definition}[The set of admissible data] The functions $F$ and $G:$   $\partial \Omega \times [0, T] \to \R$ are said to be admissible if and only if the set
	\begin{multline}
	K = \Bigl\{\phi \in H^3(\Omega \times [0, T]): \phi_t(\x, 0) = \Psi(\x) \mbox{ for all } \x \in \Omega, \mbox{ and }\\
	\phi(\x, t) = \zeta(\x, t) \mbox{ and } \partial_{\nu} \phi(\x, t) = \xi(\x, t) \mbox{ for all } (\x, t) \in \partial\Omega \times [0, T]
	\Big\}
	\label{contraints K}
\end{multline}
is not empty. 
\end{Definition}

\begin{remark}
Let 
\begin{multline}
	H = \Bigl\{\phi \in H^3(\Omega \times [0, T]): \phi_t(\x, 0) = 0 \mbox{ for all } \x \in \Omega, \mbox{ and }\\
	\phi(\x, t) = 0 \mbox{ and } \partial_{\nu} \phi(\x, t) = 0 \mbox{ for all } (\x, t) \in \partial\Omega \times [0, T]
	\Big\}.
\end{multline}
It is not hard to see that $H$ is a subspace of $H^3(\Omega \times [0, T])$ and
\[
	K = \phi + H
\] where $\phi$ is a particular element of $K$. Hence, $K$ has no boundary with respect to the topology of $H^3(\Omega \times [0, T])$.
\label{remark 3.3}
\end{remark}


In the next section, we propose a numerical method to solve the Cauchy  problem \eqref{problem for w}, \eqref{initial vel} and \eqref{boundary} with the presence of a Volterra integral.

\section{The quasi-reversibility method} \label{sec QR}

Throughout this section, we assume $F$ and $G$ are admissible so that $K$ is non-empty. 
We have the proposition.

\begin{Proposition} 
 For each $\alpha > 0$, the functional
\[
	\J_{\alpha} (w) = \left\|\calL w - \mathcal F(\x, t)\right\|_{L^2(\Omega \times [0, T])}^2   + \alpha \|w\|_{H^3(\Omega \times [0, T])}^2 
\]
has a unique minimizer $w_{\alpha}$ in $K$. 
\label{prop quasi}
\end{Proposition}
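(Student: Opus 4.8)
The plan is to recognize $\J_\alpha$ as a quadratic-plus-linear functional on the affine space $K = \phi_0 + H$ (using the decomposition from Remark \ref{remark 3.3}, where $\phi_0$ is a fixed element of $K$ and $H$ is the closed subspace of $H^3(\Omega\times[0,T])$ with homogeneous data), and to invoke the standard Riesz/Lax–Milgram machinery on the Hilbert space $H$. First I would write any $w \in K$ as $w = \phi_0 + \psi$ with $\psi \in H$, and substitute into $\J_\alpha$. Because $\calL$ is a linear operator (the Volterra term $\int_0^t \phi_t(\x,\tau)\,d\tau$ is linear in $\phi$, as is $L(\phi,\nabla\phi,\phi_t)$, and $c\phi_{tt}-\Delta\phi$ obviously is), the map $\psi \mapsto \calL\psi$ is linear, so
\[
	\J_\alpha(\phi_0+\psi) = \|\calL\psi - \big(\mathcal F - \calL\phi_0\big)\|_{L^2(\Omega\times[0,T])}^2 + \alpha\|\phi_0+\psi\|_{H^3(\Omega\times[0,T])}^2.
\]
Expanding both squares gives $\J_\alpha(\phi_0+\psi) = A(\psi,\psi) - 2\ell(\psi) + \text{const}$, where
\[
	A(\psi,\eta) = \langle \calL\psi, \calL\eta\rangle_{L^2} + \alpha\langle\psi,\eta\rangle_{H^3}
\]
is a symmetric bilinear form on $H$ and $\ell$ is a bounded linear functional on $H$.

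The key points to verify are that $A$ is a bounded, coercive bilinear form on $H$ and that $\ell$ is bounded, after which the Riesz representation theorem (or Lax–Milgram) yields a unique $\psi_\alpha \in H$ with $A(\psi_\alpha,\eta) = \ell(\eta)$ for all $\eta \in H$, and a routine convexity argument shows this $\psi_\alpha$ is the unique minimizer of $\J_\alpha$ over $H$, hence $w_\alpha = \phi_0 + \psi_\alpha$ is the unique minimizer over $K$. Boundedness of $A$ reduces to showing $\calL : H^3(\Omega\times[0,T]) \to L^2(\Omega\times[0,T])$ is a bounded operator: the terms $c\phi_{tt}-\Delta\phi$ and $L(\phi,\nabla\phi,\phi_t)$ involve at most second-order derivatives with coefficients that are bounded on $\overline\Omega\times[0,T]$ (here one uses that $\tilde h$ is smooth and nonvanishing, so $1/\tilde h$ and its derivatives are bounded, together with the regularity of $a,B,c,h$), and the Volterra term satisfies $\big\|\int_0^t \phi_t(\cdot,\tau)\,d\tau\big\|_{L^2(\Omega\times[0,T])} \le C(T)\|\phi_t\|_{L^2(\Omega\times[0,T])} \le C(T)\|\phi\|_{H^3}$ by Cauchy–Schwarz in $\tau$. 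Coercivity is immediate and is the whole point of the Tikhonov term: $A(\psi,\psi) \ge \alpha\|\psi\|_{H^3(\Omega\times[0,T])}^2$. Boundedness of $\ell$ follows from boundedness of $\calL$ together with $\mathcal F \in L^2(\Omega\times[0,T])$ (which holds by \eqref{2.12} under the stated regularity and support hypotheses, and the positivity $|h(\x,0)|>0$ on $\Omega$ makes the denominator safe).

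The only genuinely non-routine ingredient is the bound on $\calL$, and within it the Volterra term is the one feature that distinguishes this from a purely differential setting; but as noted it is handled by the elementary inequality $\int_0^T \big|\int_0^t \phi_t(\x,\tau)\,d\tau\big|^2 dt \le T^2 \int_0^T |\phi_t(\x,\tau)|^2\,d\tau$ followed by integration in $\x$. I do not expect any serious obstacle: the nonemptiness of $K$ is assumed (admissibility), the affine structure of $K$ is given by Remark \ref{remark 3.3}, and $H$ is a Hilbert space as a closed subspace of $H^3(\Omega\times[0,T])$ (closedness because the trace maps $\phi\mapsto\phi_t(\cdot,0)$, $\phi\mapsto\phi|_{\partial\Omega\times[0,T]}$, $\phi\mapsto\partial_\nu\phi|_{\partial\Omega\times[0,T]}$ are continuous on $H^3$, so their common kernel is closed). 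Thus the argument is a clean application of the direct method / Lax–Milgram once the mapping properties of $\calL$ are in hand.
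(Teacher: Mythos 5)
Your proposal is correct, but it follows a genuinely different route from the paper. You exploit the affine structure $K=\phi_0+H$ and the fact that $\J_\alpha$ is a quadratic-plus-linear functional, reducing everything to a bounded, symmetric, coercive bilinear form $A(\psi,\eta)=\langle\calL\psi,\calL\eta\rangle_{L^2}+\alpha\langle\psi,\eta\rangle_{H^3}$ on the closed subspace $H$, and then invoke Riesz/Lax--Milgram; the only substantive analytic input is the boundedness of $\calL:H^3\to L^2$, which you verify directly, with the Volterra term controlled by Cauchy--Schwarz in $\tau$. The paper instead runs the direct method of the calculus of variations: it takes a minimizing sequence in $K$, shows it is bounded in $H^3$ (using the $\alpha\|\cdot\|_{H^3}^2$ term), extracts a weak limit $w_0$, uses that $K$ is closed and convex hence weakly closed to conclude $w_0\in K$, passes to the limit in $\J_\alpha$ via the compact embedding $H^3\hookrightarrow H^2$ (strong $H^2$ convergence makes $\calL w_n\to\calL w_0$ in $L^2$), and gets uniqueness from strict convexity of $\J_\alpha$ together with the observation (Remark \ref{remark 3.3}) that $K$ has no boundary. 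What your approach buys is a one-shot existence-and-uniqueness statement with the variational equation $A(\psi_\alpha,\eta)=\ell(\eta)$ for free (which is essentially the equation $D\J_\alpha(w_\alpha)=0$ used later in \eqref{4.8} and in the numerics), at the price of making explicit the mapping property of $\calL$ that the paper only uses implicitly; the paper's direct-method argument is more generic and would survive if the data-fidelity term were not quadratic, but in this linear-quadratic setting the two proofs are of comparable length and both are sound.
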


\begin{proof}
	Fix $\alpha > 0$ 
	and let $\{w_n\}_{n = 1}^{\infty} \subset K$ be such that
	\begin{equation}
		\lim_{n \to \infty}\J_{\alpha} (w_n) = \inf_{w \in K} \J_{\alpha} (w).
		\label{3.3}
	\end{equation} 
	We claim that $\{w_n\}_{n = 1}^{\infty}$ is bounded in $H^3(\Omega \times [0, T])$. In fact, if $\{w_n\}_{n = 1}^{\infty}$ has a unbounded subsequence in $H^3(\Omega \times [0, T])$, still named as $\{w_n\}_{n = 1}^{\infty}$, then by \eqref{3.3}
	\begin{align*}
		\infty &= \limsup_{n \to \infty} \alpha \|w_n\|^2_{H^3(\Omega \times [0, T])} 
		\\
		&\leq \limsup_{n \to \infty} \left(\| \calL w_n - \mathcal F(\x, t)\|_{L^2(\Omega \times [0, T])}^2   
		+ \alpha \|w_n\|_{H^3(\Omega \times [0, T])}^2\right) 
		= \inf_{w \in K} \J_{\alpha}(w),
	\end{align*}
	which is impossible. 
	Since $\{w_n\}_{n = 1}^{\infty}$ is bounded, it has a  subsequence, still called $\{w_n\}_{n = 1}^{\infty}$, weakly converges to a function $w_0$ in $H^3(\Omega \times [0, T])$. Since $K$ is closed and convex in $H^3(\Omega \times [0, T])$, by \cite[Theorem 3.7]{Brezis:Springer2011}, $K$ is weakly closed in $H^3(\Omega  \times [0, T])$. Thus, $w_0$ is in $K$.
	By the compact continuous embedding from $H^3(\Omega \times [0, T])$ to $H^2(\Omega \times [0, T])$, without lost of generality, we can assume that $\{w_n\}_{n = 1}^{\infty}$ converges strongly to $w_0$ in $H^2(\Omega \times [0, T])$. The function $w_0$ is a minimizer of $\J_{\alpha}$. In fact,
\begin{align*}
	 \J_{\alpha}(w_0) 
		&= \| \calL w_0 - \mathcal F(\x, t)\|_{L^2(\Omega \times [0, T])}^2
		+ \alpha \|w_0\|_{H^3(\Omega \times [0, T])}^2 
		\\
		 &\leq \lim_{n \to \infty} \| \calL w_n - \mathcal F(\x, t)\|_{L^2(\Omega \times [0, T])}^2 + \alpha \limsup_{n \to \infty} \|w_n\|_{H^3(\Omega \times [0, T])}^2 
		 \\
		&= \limsup_{n \to \infty} \J_{\alpha}(w_n) = \inf_{w \in K} \J_{w \in K}(w).
	\end{align*}
On the other hand,	
since $\J_{\alpha}$ is strictly convex and $K$ has no boundary (see Remark \ref{remark 3.3}), $\J_{\alpha}$ has only one minimizer. 

\end{proof}

\begin{Definition}[Regularized solution \cite{BeilinaKlibanovBook, Tihkonov:kapg1995}]
	The unique minimizer $w_{\alpha} \in K$ of $\J_{\alpha}$, $\alpha > 0$, is called the regularized solution of problem \eqref{problem for w}, \eqref{initial vel} and \eqref{boundary}.
\end{Definition}

\begin{remark}
	The non-empty condition imposed on $K$ is necessary for the theoretical purpose. However, it is not a serious concern in computation. 
	In fact, we find the minimizer $w_{\alpha}$ of $\J_{\alpha}$  by directly solving the equation $D\J_{\alpha}(w_{\alpha}) = 0$ where $D\J_{\alpha}$ is the Fr\'echet derivative of $\J_{\alpha}$. In the finite difference scheme, this equation and the constraint $w_{\alpha} \in K$ constitute a linear system, say for e.g., $A w_{\alpha} = b$.	
	Since we do not check if $K$ is non-empty, that linear system might not have a solution. We, therefore, approximate $w_{\alpha}$ by the solution of $(A^T A  + \epsilon \Id) w = A^T b$. 
	\label{rem 4.1}
\end{remark}

In the next section, we prove that the regularized solution obtained by the quasi-reversibility method converges to the true solution of \eqref{problem for w}, \eqref{initial vel} and \eqref{boundary} as $\delta$, the noise in measurement, and $\alpha$, the regularized parameter, tend to $0$. The convergence rate is $O(\delta + \sqrt{\alpha}).$

\section{The convergence of the quasi-reversibility} \label{sec regu}

In this section, we study the convergence of the regularized solution to the true solution as the noise level and the regularized parameter tend to $0$.
\subsection{The main result}

Let $R$ be a large positive number such that $\Omega \Subset D = B(R)$.
We impose the following condition on the function $c(\x)$. Assume that there exists a point $\x_0$ in $D \setminus \overline \Omega$ such that
\begin{equation}
	\langle \x - \x_0, \nabla c(\x) \rangle \geq 0 \quad \mbox{for all } \x \in \R^d.
	\label{1.5}
\end{equation}

Recall that data for the ISP are given by the functions $F$ and $G$. 
As mentioned in Remark \ref{rem data}, we can calculate $\zeta$ and $\xi$ via \eqref{zetaxi} in terms of $F$ and $G$. These two functions serve as new data for the ISP.
Let $F^*$ and $G^*$ be the noiseless ``direct" data. Denote by  $\zeta^*$ and $\xi^*$  the functions defined in \eqref{zetaxi} when $F$ and $G$ are replaced by $F^*$ and $G^*$ respectively.
  Since $K$ is non-empty, so is the set 
\begin{multline}
		\mathcal{H} = \Big\{\phi \in H^3(\Omega \times [0, T]): \phi_t(\x, 0) = 0 \mbox{ for all } \x \in \Omega, 
		\\
		\mbox{ and } \phi(\x, t) = \zeta(\x, t) - \zeta^*(\x, t) \mbox{ and } \partial_{\nu} \phi(\x, t) = \xi(\x, t) - \xi^*(\x, t) \mbox{ for all } (\x, t) \in \partial \Omega \times [0, T]\Big\}.
		\label{calH}
\end{multline}
Define the following quantity
\begin{equation}
	\|\zeta - \zeta^*, \xi - \xi^*\|_\mathcal{H} = \inf_{\phi \in \mathcal{H}} \{\|\phi\|_{H^3(\Omega \times [0, T])}\},
	\label{norm of data}
\end{equation}
which is considered as the measured noise.
	By the trace theory, we can verify that the norm $\|\cdot\|_{\mathcal{H}}$ is stronger than the $L^2$ norm in the following sense
	\begin{equation*}
		\|\zeta - \zeta^*, \xi - \xi^*\|_{L^2(\partial \Omega \times [0, T])} \leq C\|\zeta - \zeta^*, \xi - \xi^*\|_\mathcal{H}
	\end{equation*}
	for some constant $C > 0.$
As a consequence, if the measurement noise is small with respect to $\|\cdot\|_{\mathcal{H}}$, then it is small in the $L^2$ sense.


The following theorem is the main result of this paper.


\begin{Theorem}
	Suppose that $K$ is non-empty and that condition \eqref{1.5} holds true. 
		Assume that	\begin{equation}
			\|\zeta - \zeta^*, \xi - \xi^*\|_{\mathcal{H}} \leq \delta
	\label{5.5}
	\end{equation} for some $0 \leq \delta < 1$.
	Fix $\alpha \in (0, 1)$ and let $w_{\alpha}$  be the regularized solution  of \eqref{problem for w}, \eqref{initial vel} and \eqref{boundary}. Let $w^* = \ds \frac{u_{tt}}{\tilde h}$ be the true solution of \eqref{problem for w}, \eqref{initial vel} and \eqref{boundary}. 
	Then, if $T > \left(\max_{\x \in \overline \Omega}{|\x - \x_0|^2}/\eta_0\right)^{1/2}$ where $\eta_0$ is the number, that will be indicated in Lemma \ref{lem 4.1}, the following estimate is true
	\begin{equation}
		\|w_{\alpha} - w^*\|^2_{H^1(\Omega \times [0, T])} \leq C \Big( \delta^2  + \alpha \| w^*\|_{H^3(\Omega \times [0, T])}^2	\Big)
		\label{5.6}
	\end{equation}
	\label{thm convergence}
	for some constant $C = C(D, \x_0, \Omega, T, a, B, c, h).$ As a result,
	\begin{equation}
		\|p_{\delta, \alpha} - p^*\|_{L^2(\Omega)}^2 \leq C \Big( \delta^2  + \alpha \| w^*\|_{H^3(\Omega \times [0, T])}^2	\Big)
		\label{Lipschitz}
	\end{equation}
	where $p_{\delta, \alpha}$ is computed via \eqref{2.7} when $w$ is replaced by $w_{\alpha}$ and $p^*$ is the true source.
	\label{theorem 5.1}
\end{Theorem}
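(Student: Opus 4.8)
The plan is to prove the stability estimate \eqref{5.6} via a Carleman estimate for the operator $\partial_{tt} - c^{-1}\Delta$ adapted to the lateral-Cauchy setting, using the weight function $\mu_\lambda(\x,t) = \exp(\lambda(|\x - \x_0|^2 - \eta t^2))$ (or a close variant) that is built around the point $\x_0 \in D\setminus\overline\Omega$ singled out by condition \eqref{1.5}; the hypothesis $T > (\max_{\x\in\overline\Omega}|\x-\x_0|^2/\eta_0)^{1/2}$ is exactly what makes the sub-level set $\{|\x-\x_0|^2 - \eta t^2 > \text{const}\}$ cover all of $\Omega\times[0,T]$ for suitable $\eta$, so that no information is lost at the final time. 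The monotonicity condition \eqref{1.5} is precisely the pseudo-convexity-type condition that lets the Carleman estimate hold for the variable-coefficient principal part $c(\x)\partial_{tt} - \Delta$. I would first state the Carleman estimate (this is presumably the content of Lemma \ref{lem 4.1}, which also pins down $\eta_0$) and then set $z = w_\alpha - w^*$.

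The key new point is to handle the Volterra integral term inside $\calL$. Write $\calL z = (\calL w_\alpha - \mathcal F) - (\calL w^* - \mathcal F)$. Since $w^*$ solves the equation exactly, $\calL w^* - \mathcal F = 0$; and since $w_\alpha$ minimizes $\J_\alpha$ over $K$, the Euler--Lagrange (variational) inequality gives control of $\|\calL w_\alpha - \mathcal F\|_{L^2}^2 + \alpha\|w_\alpha\|_{H^3}^2$ in terms of $\J_\alpha$ evaluated at a competitor. The natural competitor is $w^* + \psi$ where $\psi\in\mathcal H$ realizes (nearly) the infimum in \eqref{norm of data}, i.e.\ $\|\psi\|_{H^3}\le \delta$ (using \eqref{5.5}); then $w_\alpha - (w^*+\psi) \in H$, the Cauchy data of $w_\alpha - w^*$ on the lateral boundary and the initial-velocity data at $t=0$ are $O(\delta)$ in the appropriate trace norm, and $\J_\alpha(w_\alpha)\le \J_\alpha(w^*+\psi) \le C(\delta^2 + \alpha\|w^*\|_{H^3}^2)$. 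Consequently $\|\calL z\|_{L^2(\Omega\times[0,T])}^2 \le C(\delta^2 + \alpha\|w^*\|_{H^3}^2)$ as well. Now decompose $\calL z = (c z_{tt} - \Delta z) - L(z,\nabla z, z_t) + \frac{c h_{tt}}{\tilde h}\int_0^t z_t(\x,\tau)\,d\tau$; the terms $L(z,\nabla z, z_t)$ are lower-order (at most first order in $z$) and, once multiplied by the Carleman weight, are absorbed into the left-hand side for $\lambda$ large. The Volterra term is estimated by Cauchy--Schwarz: $\big(\int_0^t z_t(\x,\tau)d\tau\big)^2 \le T\int_0^T z_t(\x,\tau)^2 d\tau$, and after multiplying by $\mu_\lambda^2$ one uses that $\mu_\lambda(\x,t)$ is increasing (up to controllable factors) or, more carefully, that $\mu_\lambda(\x,\tau)\le \mu_\lambda(\x,t)$ for $\tau\le t$ since the weight decreases in $t$ --- wait, here one must be careful about the direction of monotonicity in $t$; the standard trick is that $\int_0^T\!\!\int_\Omega \mu_\lambda^2(\x,t)\big(\int_0^t z_t\,d\tau\big)^2 d\x\,dt \le C\,T^2\int_0^T\!\!\int_\Omega \mu_\lambda^2(\x,\tau) z_t(\x,\tau)^2 d\x\,d\tau$, which for $\lambda$ large is absorbed by the $\lambda\mu_\lambda^2|z_t|^2$ gradient term on the left of the Carleman estimate. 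This absorption of the \emph{nonlocal} term is the main obstacle and the genuinely novel step.

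Putting these together: the Carleman estimate applied to $z$, with the boundary and initial-trace contributions bounded by $C\delta^2\|\mu_\lambda\|_\infty^2$ from the competitor construction and the interior term $\|\mu_\lambda \calL z\|_{L^2}^2 \le C\|\mu_\lambda\|_\infty^2(\delta^2 + \alpha\|w^*\|_{H^3}^2)$, yields
\[
\lambda \int_{\Omega\times[0,T]} \mu_\lambda^2\big(|z_t|^2 + |\nabla z|^2 + \lambda^2 |z|^2\big)\,d\x\,dt \le C\|\mu_\lambda\|_\infty^2\big(\delta^2 + \alpha\|w^*\|_{H^3}^2\big)
\]
for all $\lambda \ge \lambda_0$. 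Choosing $\lambda = \lambda_0$ fixed and bounding $\mu_\lambda$ below by a positive constant on $\Omega\times[0,T]$ (using the time condition so the weight does not degenerate), one gets $\|z\|_{H^1(\Omega\times[0,T])}^2 \le C(\delta^2 + \alpha\|w^*\|_{H^3}^2)$, which is \eqref{5.6}; the constant depends only on $D,\x_0,\Omega,T,a,B,c,h$ as claimed. Finally, \eqref{2.7} expresses $p$ as $c(\x)w(\x,0)$ minus a fixed $\x$-dependent function, so $p_{\delta,\alpha} - p^* = c(\x)(w_\alpha(\x,0) - w^*(\x,0)) = c(\x) z(\x,0)$, and by the trace inequality $\|z(\cdot,0)\|_{L^2(\Omega)}^2 \le C\|z\|_{H^1(\Omega\times[0,T])}^2$; combined with boundedness of $c$ this gives \eqref{Lipschitz}. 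I expect the delicate bookkeeping to be: (i) verifying the competitor $w^*+\psi$ actually lies in $K$ (it does, since $w^*\in K$ with the exact data and $\psi$ corrects for the data perturbation, so $w^*+\psi$ has the noisy data), and (ii) the absorption of the Volterra term, for which the quadratic dependence on $T$ in the Cauchy--Schwarz bound must be beaten by the factor $\lambda$, forcing $\lambda_0$ to be chosen depending on $T$ and on $\sup|c h_{tt}/\tilde h|$.
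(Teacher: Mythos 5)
Your Step-1 argument is fine and is a legitimate variant of the paper's: instead of writing the first-order optimality identity $D\J_{\alpha}(w_{\alpha})\varphi=0$ for $\varphi$ in the test space $H$ and testing with $z=w_{\alpha}-w^*-\mathcal E$, you compare $\J_{\alpha}(w_{\alpha})$ with the competitor $w^*+\psi\in K$, $\psi\in\mathcal H$, $\|\psi\|_{H^3}\le\delta$; both give $\|\calL z\|^2_{L^2}+\alpha\|z\|^2_{H^3}\le C(\delta^2+\alpha\|w^*\|^2_{H^3})$. Your treatment of the Volterra term is also acceptable as an alternative to Lemma \ref{lem integral}: since the weight decreases in $t$, Cauchy--Schwarz plus Fubini gives the weighted bound with a factor $T^2$ but no $1/\lambda$ gain, and this can still be absorbed by the $\lambda$-term on the left of \eqref{5.10} once $\lambda$ is chosen large depending on $T$ and $\sup|c\,h_{tt}/\tilde h|$ (the paper's lemma gives the sharper $1/(4\lambda\eta_0)$ factor, but that sharpness is not essential here).

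The genuine gap is geometric and concerns the final time. You assert that the hypothesis $T>(\max_{\x\in\overline\Omega}|\x-\x_0|^2/\eta_0)^{1/2}$ makes the set $\{|\x-\x_0|^2-\eta t^2>\epsilon\}$ cover all of $\Omega\times[0,T]$; it does exactly the opposite. With $\eta=\eta_0$ and $T$ this large, every point of $\Omega\times\{T\}$ satisfies $|\x-\x_0|^2-\eta_0T^2<0$, so the admissible region $D_{\eta_0,\epsilon}$ in \eqref{2.333} pinches off strictly before $t=T$; this is precisely what the hypothesis buys, because the cutoff $\chi$ supported in that region then eliminates the Carleman boundary term at $t=T$ coming from integrating $Y_t$ (the trace $Y(\cdot,T)$ obeys only \eqref{4.4}: it is a full energy at time $T$ with no smallness, and cannot be absorbed). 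If instead you shrink $\eta$ so that the super-level set covers the whole cylinder, you reintroduce exactly this uncontrolled $t=T$ trace, and your claimed global inequality does not follow; note also that under your reading a \emph{smaller} $T$ would be better, contradicting the theorem's hypothesis. Because the Carleman estimate only yields smallness of $z$ on the truncated region $Q_{4\epsilon}$ (which contains a slab $\Omega\times[0,2\theta]$ but not the top of the cylinder), the paper needs an additional step that is entirely missing from your proposal: an energy/Gr\"onwall estimate for $z$ (Step 5), using \eqref{calL} and the bound on $\|\calL z\|_{L^2}$, which propagates the $H^1(Q_{4\epsilon})$ smallness forward in time to all of $\Omega\times[0,T]$ and thereby yields \eqref{5.6}; the passage to \eqref{Lipschitz} via \eqref{2.7} and the trace theorem is then as you describe. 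Without the correct role of $T$ and this propagation step, the proof is incomplete.
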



We recall here the Carleman estimates for the reader's convenience. It is important mentioning that these results play a crucial role for the proof of Theorem \ref{thm convergence}. Introduce the function
\begin{equation}
	W(\x, t) = \exp(\lambda(|\x - \x_0|^2 - \eta t^2)), \quad (\x, t) \in \R^d \times [0, \infty)
	\label{CWF}
\end{equation}
where $\lambda$ and $\eta$ are two positive numbers. The function $W(\x, t)$ is known as the Carleman weight function. For  $\eta > 0$ and $\epsilon > 0$, define 
\begin{equation}
	D_{\eta, \epsilon} = \{(\x, t) \in D \times [0, \infty): |\x - \x_0|^2 - \eta t^2 > \epsilon\}.
	\label{2.333}
\end{equation}
We have the lemmas.

\begin{Lemma}  [Lemma 6.1 in \cite{Klibanov:anm2015} and Theorem 1.10.2 in \cite{BeilinaKlibanovBook}]
 Assume condition \eqref{1.5} holds true. 
 Then, there exists a sufficiently small number $\eta_0 = \eta_0\left(\x_0, D, \|c\|_{C^1(\overline D)}\right) \in (0, 1)$ such that for any $\eta \in (0, \eta_0],$ one can choose a sufficiently large number $\lambda_0 = \lambda_0(D, \eta, c, \x_0) > 1$ and a number $C = C(D, \eta, c, \x_0)$  such that for all $z \in C^2(\overline {D_{\eta, \epsilon}})$ and for all $\lambda \geq \lambda_0$, the following pointwise Carleman estimate holds true
\begin{multline}
 	|c(\x) z_{tt}(\x, t) - \Delta z(\x, t)|^2 W^2(\x, t) 
 	\geq C \lambda \Big(|\nabla z(\x, t)|^2 + z_t^2(\x, t) 
  + \lambda^2 z^2(\x, t)\Big) W^2(\x, t)
  \\
 	+ \nabla \cdot Z(\x, t) + Y_t(\x, t)
	\label{5.10}
 \end{multline}
 in $D_{\eta, \epsilon}.$
 The vector valued function $Z(\x, t)$ and the function $Y(\x, t)$ satisfy
 \begin{align}
 	|Z(\x, t)| &\leq C \lambda^3 (|\nabla z(\x, t)|^2 + z_t^2(\x, t) + z^2(\x, t)) W^2(\x, t) \label{4.3}
	\\
	|Y(\x, t)| &\leq C \lambda^3(|t|(|\nabla z(\x, t)|^2 + z_t^2(\x, t) + z^2(\x, t)) + (|\nabla z(\x, t)| + |z|)|z_t(\x, t)| )W^2(\x, t).
	\label{4.4}
 \end{align}
 In particular, if either $z(\x, 0) = 0$ or $z_t(\x, 0) = 0$, then $Y(\x, 0) = 0$.
 \label{lem 4.1}
\end{Lemma}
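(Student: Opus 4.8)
The estimate is entirely pointwise and of a classical type, so the plan is to use the standard conjugation-and-splitting method for Carleman estimates (as in the cited references \cite{Klibanov:anm2015, BeilinaKlibanovBook}). Write the phase $\psi(\x, t) = |\x - \x_0|^2 - \eta t^2$, so that $W = e^{\lambda\psi}$ by \eqref{CWF}, and substitute $v = W z$, i.e. $z = e^{-\lambda\psi} v$. First I would compute the conjugated expression $W(c z_{tt} - \Delta z)$ in terms of $v$. Using $\psi_t = -2\eta t$, $\psi_{tt} = -2\eta$, $\nabla\psi = 2(\x - \x_0)$ and $\Delta\psi = 2d$, a direct differentiation gives
\[
W(c z_{tt} - \Delta z) = \big(c v_{tt} - \Delta v + \lambda^2(c\psi_t^2 - |\nabla\psi|^2)v\big) - 2\lambda\big(c\psi_t v_t - \nabla\psi\cdot\nabla v\big) + \lambda\, r\, v,
\]
where $r = -(c\psi_{tt} - \Delta\psi) = 2(\eta c + d)$ collects the lower-order $\lambda^1$ coefficient. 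The role of the weight is that on $D_{\eta,\epsilon}$, by \eqref{2.333}, the sign of $|\x - \x_0|^2 - \eta t^2$ is controlled, which is what will ultimately make the principal quadratic form definite.

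Next I would split the right-hand side into a formally self-adjoint part $L_1 v = c v_{tt} - \Delta v + \lambda^2(c\psi_t^2 - |\nabla\psi|^2)v$ and a formally skew-adjoint first-order part $L_2 v = -2\lambda(c\psi_t v_t - \nabla\psi\cdot\nabla v)$, leaving the harmless zero-order remainder $\lambda r v$. Squaring and using $(L_1 v + L_2 v + \lambda r v)^2 \geq 2 L_1 v\, L_2 v - C\lambda^2 r^2 v^2$ reduces the problem to producing, pointwise, a lower bound for the cross term $2 L_1 v\, L_2 v$. The heart of the argument is to show that $2 L_1 v\, L_2 v$ equals a positive-definite quadratic form $C\lambda(|\nabla v|^2 + v_t^2 + \lambda^2 v^2)$ plus a total divergence $\nabla\cdot\widetilde Z + \widetilde Y_t$. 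I would obtain this by expanding the product term by term and integrating by parts \emph{pointwise}, rewriting each monomial such as $v_{tt}v_t$, $\Delta v\,(\nabla\psi\cdot\nabla v)$ or $v\,v_t$ as a divergence plus a quadratic correction, via Leibniz identities like $v_{tt} v_t = \tfrac12(v_t^2)_t$ and $\Delta v\,(\nabla\psi\cdot\nabla v) = \nabla\cdot(\cdots) - (\text{quadratic in }\nabla v)$.

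The main obstacle is this third step: verifying that, after all cancellations, the accumulated quadratic form is genuinely positive with the stated power of $\lambda$ (and $\lambda^3$ for the $v^2$ coefficient). This is exactly where both hypotheses enter. The terms coming from $\nabla\cdot(c\,\cdots)$ generate a factor $\langle \x - \x_0, \nabla c\rangle$ (from differentiating $c$ against $\nabla\psi = 2(\x-\x_0)$), which is nonnegative by condition \eqref{1.5} and therefore never spoils positivity; this is the structural reason the monotonicity assumption on $c$ is imposed. The remaining competition is among the hyperbolic principal terms, and one chooses $\eta_0 \in (0,1)$ small, depending only on $\x_0$, $D$ and $\|c\|_{C^1(\overline D)}$, so that for $\eta \le \eta_0$ the principal symbol is pseudoconvex with respect to $\psi$; this forces the coefficients of $|\nabla v|^2$ and $v_t^2$ to be positive once $\lambda \geq \lambda_0$ is large enough to absorb the lower-order leftovers and the remainder $C\lambda^2 r^2 v^2$.

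Finally I would convert back to $z$. Since $v = Wz$, we have $v^2 = W^2 z^2$, while $v_t^2$ and $|\nabla v|^2$ equal $z_t^2 W^2$ and $|\nabla z|^2 W^2$ up to terms of orders $\lambda^2 z^2 W^2$ and $\lambda |z||\nabla z| W^2$, all absorbed into the dominant $C\lambda^3 z^2 W^2$ by enlarging $\lambda_0$; this produces the right-hand side of \eqref{5.10}. Setting $Z = \widetilde Z$ and $Y = \widetilde Y$ rewritten through $v = Wz$ gives the divergence and time-derivative terms, and tracking the powers of $\lambda$ and the $W^2$ factors in the integration-by-parts formulas yields exactly the bounds \eqref{4.3} and \eqref{4.4}. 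The last assertion $Y(\x, 0) = 0$ is then immediate from \eqref{4.4}: at $t = 0$ the factor $|t|$ annihilates the first group of terms, and if $z_t(\x,0) = 0$ the second group vanishes, whereas if $z(\x,0) = 0$ for all $\x$ then both $z(\x,0)$ and its spatial gradient $\nabla z(\x,0)$ vanish, so the second group again vanishes.
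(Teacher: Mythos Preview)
Your sketch is the standard conjugation-and-splitting argument for pointwise hyperbolic Carleman estimates and is essentially correct. Note, however, that the paper does not actually prove this lemma: it is quoted verbatim from the cited references (Lemma~6.1 in \cite{Klibanov:anm2015} and Theorem~1.10.2 in \cite{BeilinaKlibanovBook}), and the paper simply writes ``We refer the reader to \cite{BeilinaKlibanovBook, Klibanov:anm2015} for the proof.'' Your outline is precisely the method used in those sources, so there is no discrepancy to report.
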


\begin{Lemma}[Lemma 1.10.3 \cite{BeilinaKlibanovBook}]
	Let the function $\varphi \in C^1[0, a]$ and $\varphi'(t) \leq -b$ in $[0, a]$, where $b = const > 0$. For a function $g \in L^2(-a, a)$, consider the integral
	\[
		I(g, \lambda) = \int_{-a}^a \left(\int_0^t g(\tau) d\tau\right)^2 \exp(2\lambda \varphi(t^2)) dt.
	\]
	Then,
	\[	
		I(g, \lambda) \leq \frac{1}{4\lambda b} \int_{-a}^a g^2(t) \exp(2\lambda \varphi(t^2)) dt.
	\]
	\label{lem integral}
\end{Lemma}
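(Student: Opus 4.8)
The plan is to reduce the two-sided estimate to a one-sided weighted Hardy inequality on $(0,a)$ and to prove that inequality by a completing-the-square (multiplier) argument. Writing $y(t) = \int_0^t g(\tau)\,d\tau$, so that $y(0)=0$ and $y'=g$, and setting $w(t) = \exp(2\lambda\varphi(t^2))$, the quantity splits as $I(g,\lambda) = \int_0^a y^2 w\,dt + \int_{-a}^0 y^2 w\,dt$. Since $w$ is even and the change of variables $t\mapsto -t$ turns the second integral into one of the same shape (with $g$ replaced by $g(-\cdot)$ and the same weight), it suffices to prove the one-sided bound $\int_0^a y^2 w\,dt \le \frac{1}{4\lambda b}\int_0^a (y')^2 w\,dt$ and then apply it to each half of $(-a,a)$.

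For the one-sided estimate I would start from the elementary inequality $(y' - y/t)^2 \ge 0$, whose extremal $y=t$ matches the sharp constant. Integrating $(y'-y/t)^2 w \ge 0$ over $(0,a)$ gives $\int_0^a (y')^2 w\,dt \ge 2\int_0^a \frac{y'y}{t}\,w\,dt - \int_0^a \frac{y^2}{t^2}\,w\,dt$. The central step is to integrate the cross term by parts, writing $2y'y = (y^2)'$ and differentiating the weight; only the first derivative of $\varphi$ enters, through $w'(t) = 4\lambda t\,\varphi'(t^2)\,w(t)$, which is exactly what the hypothesis $\varphi\in C^1$ provides. Computing $(w/t)' = (4\lambda t^2\varphi'(t^2)-1)\,w/t^2$ shows that the two copies of $\int y^2 w/t^2$ cancel identically, leaving $\int_0^a (y')^2 w\,dt \ge \big[\,y^2 w/t\,\big]_0^a - 4\lambda\int_0^a y^2\,\varphi'(t^2)\,w\,dt$. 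The boundary term at $t=a$ is nonnegative and may be discarded, while $\varphi'(t^2)\le -b$ converts the remaining integral into the lower bound $4\lambda b\int_0^a y^2 w\,dt$; rearranging produces the constant $1/(4\lambda b)$.

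The main technical obstacle is the singularity of the multiplier $1/t$ at the origin, so the integration by parts must be justified carefully. I would perform it on $(\epsilon, a)$ and let $\epsilon\to 0$, which requires the boundary contribution $y(\epsilon)^2 w(\epsilon)/\epsilon$ to vanish. This follows from $g\in L^2$: Cauchy--Schwarz gives $|y(t)|^2 \le t\,\|g\|_{L^2(0,t)}^2$, hence $y(t)^2/t \le \|g\|_{L^2(0,t)}^2 \to 0$ as $t\to 0^+$, and the same estimate together with the classical Hardy inequality shows $\int_0^a y^2 w/t^2\,dt < \infty$, so every integral in the computation converges and the limit $\epsilon\to 0$ is legitimate. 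Combining the one-sided bounds for the two halves of $(-a,a)$ then yields $I(g,\lambda)\le \frac{1}{4\lambda b}\int_{-a}^a g^2 w\,dt$, which is the assertion.
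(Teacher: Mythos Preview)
The paper does not give its own proof of this lemma; it simply refers the reader to \cite{BeilinaKlibanovBook} and \cite{Klibanov:anm2015} for the argument. Your proof is correct. Reducing by evenness of the weight to the half-interval $(0,a)$ and then expanding $(y'-y/t)^2\ge 0$ against $w$, followed by integration by parts of the cross term with $w/t$, is a clean Hardy-type route that delivers exactly the constant $1/(4\lambda b)$. The cancellation of the two $\int y^2 w/t^2$ contributions is computed correctly, and your handling of the singular boundary contribution at $t=0^+$ via the Cauchy--Schwarz bound $y(t)^2\le t\,\|g\|_{L^2(0,t)}^2$, together with the classical Hardy inequality to ensure integrability of $y^2/t^2$, is accurate. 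One small remark, which concerns the \emph{statement} rather than your argument: $\varphi$ is only assumed to lie in $C^1[0,a]$ while the integrand involves $\varphi(t^2)$ with $t^2$ ranging over $[0,a^2]$; in the paper's sole application $\varphi$ is linear, so this causes no difficulty.
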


We refer the reader to \cite{BeilinaKlibanovBook, Klibanov:anm2015} for the proof of Lemmas \ref{lem 4.1} and \ref{lem integral}.

\subsection{The proof of Theorem \ref{thm convergence}}

	The arguments in this section follow the ideas of Klibanov in  \cite{BeilinaKlibanovBook, KazemiKlibanov:aa1993, Klibanov:anm2015, KlibanovMalinsky:ip1991}, in which the Lipschitz  stability was established when the operator $\calL$ does not involve the Volterra integral. 
	Note that this stability estimate was first established in \cite{KlibanovMalinsky:ip1991}.
	In this subsection, $C$ denotes a generic constant depending only on known sets and functions: $D, \x_0, \Omega, T, a, B, c, h$.  The number $C$ might change from estimate to estimate.

{\bf Step 1.} 
Let
\begin{multline*}
 H = \left\{\varphi \in H^3(\Omega \times [0, \infty)): \varphi_t(\x, 0) = 0 \mbox{ for } \x \in \Omega \right. 
 \\
\left. \mbox{ and } \varphi(\x, t) = \partial_{\nu}\varphi(\x, t) = 0 \mbox{ for } (\x, t) \in \partial \Omega \times [0, T] \right\}
 \end{multline*}
 be the set of test functions.
 Since the regularized solution $w_{\alpha}$ is the minimizer of $\J_{\alpha}$ on $K$, for all $\varphi \in H,$ we have
\begin{equation}
	  \langle \calL w_{\alpha} - \mathcal F, \calL \varphi\rangle_{L^2(\Omega \times [0, T])} + \alpha \langle w_{\alpha}, \varphi \rangle_{H^3(\Omega \times [0, T])}= 0.
	  \label{4.8}
\end{equation}
 On the other hand,
since $ w^*$ is the true solution of the Volterra integro-differential equation  \eqref{problem for w}, we have
\begin{equation}
	  \langle \calL w^* - \mathcal F, \calL \varphi\rangle_{L^2(\Omega \times [0, T])} + \alpha \langle w^*, \varphi \rangle_{H^3(\Omega \times [0, T])} = \alpha \langle w^*, \varphi \rangle_{H^3(\Omega \times [0, T])}.
	  \label{4.9}
\end{equation}
Subtracting \eqref{4.8} from \eqref{4.9}, we have
\begin{equation}
	\langle \calL (w_{\alpha} - w^*), \calL \varphi\rangle_{L^2(\Omega \times [0, T])} + \alpha\langle w - w^*, \varphi\rangle_{H^3(\Omega \times [0, T])} 
	=
	-\alpha \langle w^*, \varphi \rangle_{H^3(\Omega \times [0, T])}
	\label{4.11}
\end{equation}
for any $\varphi \in H.$
By \eqref{norm of data} and assumption \eqref{5.5}, there is an ``error" function  $\mathcal E$  in $\mathcal{H}$ such that
\begin{equation}
	\|\mathcal E\|_{H^3(\Omega \times [0, T])} \leq C \delta.
	\label{error function}
\end{equation}
Since $w_{\alpha} \in K$, by \eqref{calH}, it is obvious that 
\begin{equation}
	z = w_{\alpha} - w^* - \mathcal E
	\label{z}
\end{equation} is in $H$. Choosing $\varphi = z$ as a test function for \eqref{4.11}, we have
\begin{equation}
	\|\calL z\|_{L^2(\Omega \times [0, T])}^2 + \alpha\| z\|_{H^3(\Omega \times [0, T])}^2 
	= - \langle \calL \mathcal E, \calL z \rangle_{L^2(\Omega \times [0, T])}^2 - \alpha  \langle  \mathcal E,  z \rangle_{H^3(\Omega \times [0, T])}^2 -\alpha \langle w^*, z \rangle_{H^3(\Omega \times [0, T])}.
	\label{5.4}
\end{equation}
On the other hand, using the inequality $ab \leq \frac{a^2}{8} + 2 b^2$ and \eqref{5.4} and noting that $0 < \alpha < 1$, we have
\begin{equation}
	\|\calL z\|_{L^2(\Omega \times [0, T])}^2 + \alpha\| z\|_{H^3(\Omega \times [0, T])}^2 
	\leq C \Big( \delta^2  
	+ \alpha \| w^*\|_{H^3(\Omega \times [0, T])}^2
	\Big).
	\label{step 1}
\end{equation}

\noindent {\bf Step 2.} 
Recall that
\[
	P_1 = \min_{\x \in \Omega}\{|\x - \x_0|\}, \quad P_2 = \max_{\x \in \Omega}\{|\x - \x_0|\}.
\]
Let $\eta_0$ be the number as in Lemma \ref{lem 4.1} and $\epsilon$ be a small positive number. 
Introduce the cut-off function $\chi(\x, t)$ satisfying $\chi_t(\x, t) \leq 0$ and
\begin{equation}
	\chi(\x, t) = \left\{
		\begin{array}{ll}
			1 & \mbox{if } |\x - \x_0|^2 - \eta_0 t^2 > 2\epsilon,\\
			\in (0, 1) & \mbox{if } |\x - \x_0| - \eta_0 t^2 > \epsilon,\\
			0 & \mbox{otherwise}
		\end{array}
	\right. 
	\label{5.15}
\end{equation}
and define the set
\[
	Q_{\epsilon} = \{(\x, t) \in \Omega \times [0, T]: |\x - \x_0|^2 - \eta_0 t^2 > \epsilon\} = D_{\eta_0, \epsilon} \cap \Omega \times [0, T]
\]
where $D_{\eta_0, \epsilon}$ is as in \eqref{2.333}.
We next apply Lemma \ref{lem 4.1}  to get \eqref{5.10} for the function $z$. Multiply $\chi(\x, t)$ to both side of \eqref{5.10} and then integrate the resulting on $Q_{\epsilon}$. We have
\begin{multline}
	\int_{Q_{\epsilon}}W^2(\x, t) \chi(\x, t) (z_{tt} - \Delta z)^2 d\x dt 
	\geq
	 C \lambda \int_{Q_{\epsilon}} \chi(\x, t) \left(|\nabla z|^2 + z_t^2 + \lambda^2 z^2\right) W^2(\x, t) d\x dt
	 \\
 	+ \int_{Q_{\epsilon}} \chi(\x, t) \nabla \cdot Z d\x dt + \int_{Q_{ \epsilon}} \chi(\x, t) Y_t d\x dt
	\label{4.14}
\end{multline}
where $Z$ and $Y$ satisfy \eqref{4.3} and \eqref{4.4} respectively.

Since $z \in H$, it follows from \eqref{4.3} that $Z(\x, t) = 0$ for all $\x \in \partial \Omega$ and $t \in [0, \hat t_{\epsilon}(\x)]$ where
\begin{equation}
	\hat t_{s}(\x) = \sqrt{\frac{|\x - \x_0|^2 - s}{\eta_0}}, \quad s \in (0, P_1^2)
	\label{tx}
\end{equation}
Recall that
\[
	\chi(\x, t) \nabla \cdot Z = \nabla \cdot (\chi(\x, t) Z(\x, t)) - \nabla \chi(\x, t) \cdot Z(\x, t).
\]
 Hence, by \eqref{5.15} 
\begin{align}
	\Big|\int_{Q_{ \epsilon}} \chi(\x, t) \nabla \cdot Z  dt d\x\Big|
	&=  \Big|\int_{\Omega} \int_0^{\hat t_{\epsilon}(\x)} \chi(\x, t) \nabla \cdot Zdt d\x \Big|
	\nonumber\\
	&\leq \Big|\int_{\Omega} \int_0^{\hat t_{\epsilon}(\x)}\nabla \cdot (\chi(\x, t) Z)dt d\x 
	- \int_{\Omega} \int_0^{\hat t_{\epsilon}(\x)} \nabla \chi(\x, t) \cdot Zd t d\x\Big|. \label{5.2121}
\end{align}
Since $z(\x, t) \in H$, both $z(\x, t)$ and $\partial_{\nu} z(\x, t)$ are identically $0$ on $\partial \Omega \times [0, T].$ By \eqref{4.3}, $Z = 0$ on $\partial \Omega \times [0, T].$ Hence, the first integral on the right hand side of \eqref{5.2121} vanishes.
By \eqref{4.3}, we have
\[
	\Big|\int_{Q_{ \epsilon}} \chi(\x, t) \nabla \cdot Z  dt d\x\Big| \leq C\lambda^3 \int_{\Omega} \int_{\hat t_{2\epsilon}(\x)}^{\hat t_{\epsilon}(\x)} W^2(\x, t)(|z|^2 + |\nabla z|^2 + |z_t|^2) dt d\x.
\]
Noting that
\begin{equation}
	W^2(\x, t) \leq \exp(4\lambda \epsilon) \quad \mbox{for all } \x \in \Omega, t \in [\hat t_{2\epsilon}(\x), \hat t_{\epsilon}(\x)],
	\label{5.16}
\end{equation}
we obtain
\begin{equation}
	\Big|\int_{Q_{ \epsilon}} \chi(\x, t) \nabla \cdot Z  dt d\x\Big|
	 \leq C\lambda^3 \exp(4\lambda \epsilon) \int_{\Omega} \int_{\hat t_{2\epsilon}(\x)}^{\hat t_{\epsilon}(\x)} (|z|^2 + |\nabla z|^2 + |z_t|^2) dt d\x.
	 \label{5.17}
\end{equation}
We next estimate the last term in \eqref{4.14}.
Since $z_t(\x, 0) = 0$, it follows from \eqref{4.4} that $Y(\x, 0) = 0.$ 
 We have
\begin{align*}
	\Big|\int_{Q_{\epsilon}} \chi(\x, t) Y_t(\x, t) d\x dt\Big| 
	&= \Big| \int_{\Omega} \int_{0}^{\hat t_{\epsilon}(\x)}  \chi(\x, t) Y_t(\x, t)  dt  d\x \Big|
	\\
	&= \Big|\int_{\Omega}  \int_0^{\hat t_{\epsilon}(\x)} \left[(\chi(\x, t) Y(\x, t))_t - \chi_t(\x, t) Y(\x, t) \right]d\tau  d\x \Big|.
\end{align*}
By \eqref{5.15}, $\nabla \chi(\x, t) = 0$ for all $t \in (0, \hat t_{2\epsilon}(\x))$.
Hence, by \eqref{4.4} and \eqref{5.16},
\begin{equation}
	\Big|\int_{Q_{\epsilon}} \chi(\x, t) Y_t d\x dt\Big| 
	\leq 
	C \lambda^3 \exp(4\lambda \epsilon) \Big| \int_{\Omega} \int_{\hat t_{2\epsilon}(\x)}^{\hat t_{\epsilon}(\x)}  (z^2 + |\nabla z|^2 +z_t^2) d\x dt\Big|.
	\label{5.18}
\end{equation}

It follows from  \eqref{4.14}, \eqref{5.17} and \eqref{5.18} that
\begin{multline}
	\int_{Q_{ \epsilon}}W^2(\x, t) \chi(\x, t) (z_{tt} - \Delta z)^2 d\x dt 
	\geq C \lambda \int_{Q_{\eta_0, \epsilon}} \chi(\x, t) \left(|\nabla z|^2 + z_t^2 + \lambda^2 z^2\right) W^2(\x, t) d\x dt
	\\
	- C \lambda^3 \exp(4\lambda \epsilon) \int_{\Omega} \int_{\hat t_{2\epsilon}(\x)}^{\hat t_{\epsilon}(\x)}  (|\nabla z|^2 + z_t^2 + z^2) d\x dt.
	\label{4.17}
\end{multline}
It follows from \eqref{calL}  that
\begin{multline*}
	\int_{Q_{\epsilon}}  W^2(\x, t) \chi(\x, t) |\calL z|^2 d\x dt  \geq  \int_{Q_{\epsilon}}W^2(\x, t) \chi(\x, t) (z_{tt} - \Delta z)^2 d\x dt 
	\\
	 - C \int_{Q_{\epsilon}}W^2(\x, t) \chi(\x, t) (|\nabla z|^2 + z_t^2 + z^2) d\x dt 
	 - C\int_{Q_{\epsilon}}W^2(\x, t) \chi(\x, t) \Big|\int_0^t z_t(\x, \tau) d\tau\Big|^2 d\x dt.
\end{multline*}
Using this, \eqref{4.17} and the fact that $\chi(\x, t) \leq 1$, we obtain
\begin{multline}
	\int_{Q_{ \epsilon}}  W^2(\x, t) \chi(\x, t) |\calL z|^2 d\x dt  \geq  
	  C (\lambda - 1) \int_{Q_{ \epsilon}}W^2(\x, t) \chi(\x, t) (|\nabla z|^2 + z_t^2 + z^2)  dt d\x 
	  \\
	  - C \lambda^3 \exp(4\lambda \epsilon)\int_{\Omega} \int_{\hat t_{2\epsilon}(\x)}^{\hat t_{\epsilon}(\x)} (|\nabla z|^2 + z_t^2 + z^2)  dt d\x
	 - C\int_{Q_{\epsilon}}W^2(\x, t)   \Big|\int_0^t z_t(\x, \tau) d\tau\Big|^2  dt d\x.
	 \label{5.19}
\end{multline}

\noindent {\bf Step 3.}
We next estimate the Volterra-integral 
\[\ds\int_{Q_{\epsilon}}W^2(\x, t) \left|\int_0^t z_t(\x, \tau) d\tau\right|^2 dt d\x
\] in the right hand side of \eqref{5.19}.
We have 
\begin{multline}
	\int_{Q_{\epsilon}}W^2(\x, t) \left(\int_{0}^t z_t(\x, \tau) d\tau\right)^2 dt d\x
	\\
	= \int_{\Omega} \exp\left(2\lambda |\x - \x_0|^2\right)  
 \int_0^{\hat t_{\epsilon}(\x)} \exp(-2\lambda \eta_0 t^2)\left(\int_{0}^t z_t(\x, \tau) d\tau\right)^2  dt  d\x.
 \label{5.2020}
\end{multline}
Letting $I$ denote the integral with respect to $t$,
extending $z$ as an even function i.e. $z(\x, -t) = z(\x, t)$ and applying Lemma \ref{lem integral} with $\varphi(t) = -\eta_0 t$, we have
\[
	I \leq \frac{1}{4\lambda \eta_0}\int_0^{\hat t_{\epsilon}(\x)} \exp(-2\lambda \eta_0 t^2) \chi(\x, t) |z_t(\x, t)|^2 dt.
\]
By \eqref{5.2020},
\[
	\int_{Q_{\epsilon}}W^2(\x, t) \left(\int_{0}^t z_t(\x, \tau) d\tau\right)^2 d\x dt 
	\leq
	\frac{1}{4\lambda \eta_0} \int_{Q_{\epsilon}}W^2(\x, t) |z_t|^2d\x dt.
\]

Since $\chi(\x, t) = 1$ for $0 < t < \hat t_{2\epsilon}(\x)$,  we have
\[
	\Big|\int_{Q_{\epsilon}} W^2(\x, t) (\chi(\x, t) - 1) |z_t|^2d\x dt \Big| \leq C \lambda^3 \exp(4\lambda \epsilon) \int_{\Omega} \int_{\hat t_{2\epsilon}(\x)}^{\hat t_{\epsilon}(\x)}  (|\nabla z|^2 + z_t^2 + z^2) d\x dt.
\]
Hence, noting that $\lambda \gg 1$, we obtain by \eqref{5.19} that
\begin{multline}
	\int_{Q_{\epsilon}}  W^2(\x, t) \chi(\x, t) |\calL z|^2 d\x dt  \geq  
	  C \lambda  \int_{Q_{ \epsilon}}W^2(\x, t) \chi(\x, t) (|\nabla z|^2 + z_t^2 + z^2) d\x dt 
	  \\
	  - C \lambda^3 \exp(4\lambda \epsilon) \int_{\Omega} \int_{\hat t_{2\epsilon}(\x)}^{\hat t_{\epsilon}(\x)}  (|\nabla z|^2 + z_t^2 + z^2) d\x dt.
	 \label{5.20}
\end{multline}

\noindent {\bf Step 4.} 

In this step, we estimate $\|z\|_{H^1(Q_{2\epsilon})}$.
Note that
\[
	 \chi(\x, t) \leq 1 \mbox{ and } W(\x, t) \leq \exp(2\lambda P_2^2 ) \quad \mbox{for all } (\x, t) \in Q_{\epsilon}.
\]
Using \eqref{step 1} gives
\begin{align*}
	\int_{Q_{ \epsilon}}W^2(\x, t) \chi(\x, t)|\calL z|^2 d\x dt 
	&\leq \exp(2\lambda P_2^2 )\left(\|\calL z\|_{L^2(\Omega \times [0, T])}^2 + \alpha\| z\|_{H^3(\Omega \times [0, T])}^2\right) 
	\\
	&\leq C \exp(2\lambda P_2^2 )\Big( \delta^2  
	+ \alpha \| w^*\|_{H^3(\Omega \times [0, T])}^2
	\Big).
\end{align*}
By \eqref{5.20}
\begin{multline*} 
	   \lambda  \int_{Q_{\epsilon}}W^2(\x, t) \chi(\x, t) (|\nabla z|^2 + z_t^2 + z^2) d\x dt 
	  \leq
	  C \lambda^3 \exp(4\lambda \epsilon) \int_{\Omega} \int_{\hat t_{2\epsilon}(\x)}^{\hat t_{\epsilon}(\x)}  (|\nabla z|^2 + z_t^2 + z^2) d\x dt 
	  \\
	  + C \exp(2\lambda P_2^2 )\Big( \delta^2  
	+ \alpha \| w^*\|_{H^3(\Omega \times [0, T])}^2
	\Big).
\end{multline*}
Since $Q_{4\epsilon} \subset Q_{\epsilon}$ and 
\[
	\chi(\x, t) = 1 \mbox{ and } W^2(\x, t) \geq \exp(8\lambda \epsilon) \quad \mbox{for all } (\x, t) \in Q_{4\epsilon},
\]
we have
\begin{multline} 
	   \lambda  \exp(8\lambda \epsilon) \int_{Q_{4\epsilon}}  (|\nabla z|^2 + z_t^2 + z^2) d\x dt
	   \\ 
	  \leq
	  C \lambda^3 \exp(4\lambda \epsilon) \int_{\Omega} \int_{\hat t_{2\epsilon}(\x)}^{\hat t_{\epsilon}(\x)}  (|\nabla z|^2 + z_t^2 + z^2) d\x dt 
	  + C \exp(\lambda P_2^2 )\Big( \delta^2  
	+ \alpha \| w^*\|_{H^3(\Omega \times [0, T])}^2
	\Big).
	\label{5.21}
\end{multline}
It follows from \eqref{5.21} that
\begin{multline}
	\int_{Q_{4\epsilon}}  (|\nabla z|^2 + z_t^2 + z^2) d\x dt  \leq 
	\frac{\exp(-4\lambda \epsilon)}{\lambda }\int_{\Omega} \int_{\hat t_{2\epsilon}(\x)}^{\hat t_{\epsilon}(\x)}  (|\nabla z|^2 + z_t^2 + z^2) d\x dt 
	  \\
	  + C \exp(2\lambda (P_2^2 - 4\epsilon))\Big( \delta^2  
	+ \alpha \| w^*\|_{H^3(\Omega \times [0, T])}^2 \Big).
	\label{5.25}
\end{multline}


\noindent {\bf Step 5.}

For all $t \in [0, T]$, using \eqref{calL}, we have 
\begin{align*}
	\int_{\Omega} z_t (z_{tt} - \Delta z + z) d\x 
	&\leq 
	C \left(\int_{\Omega} |z_t\calL z| d\x + \int_{\Omega} |z_t|(|z| + |z_t| + |\nabla z| )d\x  + \int_{\Omega} |z_t|\left|\int_{0}^t z_td\tau\right| d\x
	\right)
	\\
	&= C \left(\int_{\Omega} |z_t\calL z| d\x + \int_{\Omega} |z_t|(|z| + |z_t| + |\nabla z| )d\x  + \int_{\Omega} |z_t|\left|z - z(\x, 0)\right| d\x
	\right).
\end{align*}
Using \eqref{step 1} and the inequality $2ab \leq a^2 + b^2$, we have for all $t \in [0, T]$
\begin{multline*}
	\int_{\Omega} z_t (z_{tt} - \Delta z + z) d\x 
	\leq 
	 C\Big(\int_{\Omega} (|z|^2 + |z_t|^2 + |\nabla z|^2)d\x
	+ \int_{\Omega}|z(\x, 0)|^2 d\x
	\\
	+ \int_{\Omega}|\calL z|^2d\x
	+ \delta^2 + \alpha \|w^*\|_{H^3(\Omega \times [0, T])}^2\Big).
\end{multline*}
Integrating by parts for the left hand side of the inequality above, we have
\begin{equation}
	 E'(t)
	\leq 
	 C\left(E(t)
	+ \int_{\Omega}|z(\x, 0)|^2 d\x
	+ \int_{\Omega}|\calL z|^2d\x
	+ \delta^2 + \alpha \|w^*\|_{H^3(\Omega \times [0, T])}^2\right)
	\label{5.26}
\end{equation}
where
\begin{equation}
	E(t) = \int_{\Omega} (|z|^2 + |z_t|^2 + |\nabla z|^2) d\x.
	\label{E}
\end{equation}
 Define  a smooth cut-off function $\chi_1(t)$ satisfying
\[
	\chi_1(t) = \left\{
		\begin{array}{ll}
			0 & t < \theta,\\
			\in (0, 1) &\theta < t < 2\theta,\\
			1 & t > 2\theta.
		\end{array}
	\right.
\]
Multiplying $\chi_1(t)$ to both sides of \eqref{5.26}, we have
\begin{multline}
	(\chi_1(t) E(t))' 
	\leq  
	 C\chi_1(t)E(t) + \chi_1'(t) E(t) 
	 \\
	+ C\left( \int_{\Omega}|z(\x, 0)|^2 d\x
	+ \int_{\Omega}|\calL z|^2d\x
	+ \delta^2 + \alpha \|w^*\|_{H^3(\Omega \times [0, T])}^2\right).
	\label{5.27}
\end{multline}
Due to the trace theory, $\ds\int_{\Omega}|z(\x, 0)|^2d\x$ is bounded by $C\|z\|_{H^1(Q_{4\epsilon})}^2$.   We deduce from \eqref{5.27} that
\[
(\chi_1(t) E(t))' 
	\leq  
	 C\chi_1(t)E(t) + 
	C\left( \|z\|_{H^1(Q_{4\epsilon})}^2 
	+ |\chi_1'(t) E(t)|
	+ \int_{\Omega}|\calL z|^2d\x
	+ \delta^2 + \alpha \|w^*\|_{H^3(\Omega \times [0, T])}^2\right).
\]
Using Gr\"onwall's inequality and noting that $\chi_1(0) = 0$, we have
\[
	\chi_1(t)E(t) \leq C\left( \|z\|_{H^1(Q_{4\epsilon})}^2
	+ |\chi_1'(t) E(t)|
	+ \int_{\Omega}|\calL z|^2d\x
	+ \delta^2 + \alpha \|w^*\|_{H^3(\Omega \times [0, T])}^2\right) \quad t \in [0, T].
\]
Integrating the inequality above with respect to $t$, we have
\begin{align}
	\int_{\Omega \times [0, T]}  \chi_1(t)(|z|^2 + |z_t|^2 + |\nabla z|^2) dt d\x &\leq C\left( \|z\|_{H^1(Q_{4\epsilon})}^2
	+ \delta^2 + \alpha \|w^*\|_{H^3(\Omega \times [0, T])}^2  
	\right) \notag
	\\
	&\quad + C\int_{0}^T|\chi_1'(t) E(t)|dt
	+ C\int_{\Omega \times [0, T]}|\calL z|^2d\x dt.
	\label{5.28}
\end{align}
Since $\chi_1'(t) = 0$ for $t > 2\theta$ and $\Omega \times [0, 2\theta] \subset Q_{4\epsilon}$, it follows from \eqref{E} that
\begin{equation}
	\int_{0}^T|\chi_1'(t) E(t)|dt 
	= \int_{0}^{2\theta}|\chi_1'(t) E(t)|dt 
	\leq C\|z\|_{H^1(Q_{4\epsilon})}^2.
	\label{5.37}
\end{equation}
Using \eqref{step 1} and \eqref{5.37}, we deduce from \eqref{5.28} that
\begin{equation}
\int_{\Omega \times [0, T]}  \chi_1(t)(|z|^2 + |z_t|^2 + |\nabla z|^2) dt d\x \leq C\left( \|z\|_{H^1(Q_{4\epsilon})}^2
	+ \delta^2 + \alpha \|w^*\|_{H^3(\Omega \times [0, T])}^2  
	\right).
	\label{5.38}
\end{equation}
It follows from \eqref{5.25} and \eqref{5.38} that
\begin{multline}
	\int_{\Omega \times [0, T]}  \chi_1(t)(|\nabla z|^2 + z_t^2 + z^2) d\x dt  \leq 
	\frac{C\exp(-4\lambda \epsilon)}{\lambda }\int_{\Omega} \int_{\hat t_{2\epsilon}(\x)}^{\hat t_{\epsilon}(\x)}  (|\nabla z|^2 + z_t^2 + z^2) dt d\x 
	  \\
	  + C \exp(2\lambda (P_2^2 - 4\epsilon))\Big( \delta^2  
	+ \alpha \| w^*\|_{H^3(\Omega \times [0, T])}^2 \Big).
	\label{5.29}
\end{multline}
Adding \eqref{5.25} and \eqref{5.29}, we have
\begin{multline*}
	\int_{\Omega \times [0, T]}  (|\nabla z|^2 + z_t^2 + z^2) d\x dt  \leq 
	\frac{C\exp(-4\lambda \epsilon)}{\lambda }\int_{\Omega} \int_{\hat t_{2\epsilon}(\x)}^{\hat t_{\epsilon}(\x)}  (|\nabla z|^2 + z_t^2 + z^2) d\x dt 
	  \\
	  + C \exp(2\lambda (P_2^2 - 4\epsilon))\Big( \delta^2  
	+ \alpha \| w^*\|_{H^3(\Omega \times [0, T])}^2 \Big).
\end{multline*}
Fix $\lambda$ as a large number such that $C\exp(-4\lambda \epsilon) < 1/2$. We have 
\begin{equation}
	\int_{\Omega \times [0, T]}  (|\nabla z|^2 + z_t^2 + z^2) d\x dt  \leq 
	   C \Big( \delta^2  
	+ \alpha \| w^*\|_{H^3(\Omega \times [0, T])}^2 \Big).
	\label{5.30}
\end{equation}
Combining    \eqref{error function}, \eqref{z}, \eqref{5.4} and \eqref{5.30}, we obtain \eqref{5.6}. 

\noindent {\bf Step 6.}
	The analog of \eqref{2.7} for the function $p^*$ is
	\[
		p^*(\x) = c(\x) w^*(\x, 0) - \frac{ \Delta f(\x) + a(\x) f(\x) + B(\x) \cdot \nabla f(\x)}{h(\x, 0)}
	\] and the one for the function $p_{\delta, \alpha}$ is
	\[
		p_{\delta, \alpha}(\x) = c(\x) w_{\alpha}(\x, 0) - \frac{ \Delta f(\x) + a(\x) f(\x) + B(\x) \cdot \nabla f(\x)}{h(\x, 0)}
	\] 
	Using these identities, \eqref{5.6} and the trace theory, we obtain \eqref{Lipschitz}.

The proof is complete.


\begin{remark}
	Up to the knowledge of the author, the available convergence results and the rates of the convergence are known for partial differential equation without the presence of Volterra integral; see \cite{Klibanov:anm2015}. This is the first time when this method is extended for integro-differential equations.
\end{remark}

\section{Numerical tests} \label{sec num}

In this section, we display some numerical examples when $d = 2$. The set $\Omega$ is the cube $(-0.5, 0.5)^2$ and the time $T$ is 1. 

For the simplicity, we only consider the case $c(\x) = 1$, $a(\x) = B(\x) = 0$ for which condition \eqref{1.5} holds true.
 In addition, we set the initial value and velocity $f$ and $g$ of the wave field $u(\x, t)$ identically equal zero. 
These simplifications do not weaken the result of the paper because the contributions of those functions are not important in our analysis. 
 More precisely, in this section,  we implement our method and display numerical solutions to the ISP for the problem
\begin{equation*}
	\left\{
		\begin{array}{ll}
			u_{tt} = \Delta u + p(\x) h(\x, t) & (\x, t) \in \R^2 \times [0, \infty),\\
			u(\x, 0) = u_{t}(\x, 0) = 0 &\x \in \Omega.
		\end{array}
	\right.
\end{equation*}

\subsection{Generating the simulated data}

We solve the forward problem using the finite difference method. Set $\Omega_1 = (-R, R)^2$. In the computation, we choose $R = 3$. We create a $N \times N$, $N = 500$, grid 
\[
	\mathcal{G} = \{\x_{m, n} = (-R + (m - 1)d_{\x}, -R + (n - 1)d_{\x}): 1 \leq m, n \leq N\} \subset \Omega_1
\] 
where $d_{\x} = 2R/(N-1).$ We also split the time interval into a uniform partition 
\[
	t_1 = 0 < t_2 < \dots < t_{Nt} = T = 1
\] where $N_t = 120$. The step size of the time variable is $d_t = t_2 - t_1 = T/N_t.$
Then, given $u(\x, 0) = u(\x, d_t) = 0$ for all $\x \in G$, we explicitly calculate $u(\x, t + d_t)$ by
\[
	u(\x, t + d_t) = 2 u(\x, t) - u(\x, t - d_t) + d_t^2(\Delta u(\x, t) + p(\x) h(\x, t)).
\]
The noiseless data $F^*(\x, t) = u(\x, t)$ and $G^*(\x, t) = \partial_{\nu} u(\x, t)$ for $\x \in \partial \Omega$ and $t \in \{t_1, \dots, t_{Nt}\}$ can be extracted easily.

Let $\delta > 0$ denote the noise level. Noisy data are set to be
\begin{equation}
	F(\x, t) = F^*(\x, t)(1 + \delta (2\textrm{rand}(\x, t) - 1))
	\label{fnoise}
\end{equation}
and
\begin{equation}
	G(\x, t) = G^*(\x, t)(1 + \delta (2 \textrm{rand}(\x, t) - 1)).
	\label{gnoise}
\end{equation}
Here, $\textrm{rand}$ is a Matlab function that generates uniformly distributed random numbers in the interval $[0, 1]$. We use  $2 \textrm{rand}(\x, t) - 1$ in \eqref{fnoise} and \eqref{gnoise} to create random numbers in the interval $[-1, 1].$
In this paper, we test our method when $\delta$ takes values  $2\%$, $5\%$ and $10\%.$

\subsection{Differentiating noisy data by Tikhonov regularization} \label{sec differentiate}

The first step to solve the ISP is to calculate the second derivatives of $F(\x, t)$ and $G(\x, t)$ with respect to $t$. 
Since data are supposed to contain noise, see \eqref{fnoise} and \eqref{gnoise},  they can not be differentiated  using the finite difference method. 
We calculate the second derivative of data by the Tikhonov regularization method, which is widely used in the community.  For each $\x \in \partial \Omega$, noting that 
\begin{equation*}
	F(\x, 0) = F_t(\x, 0) = 0 
\end{equation*}
we can write
\begin{equation}
	F(\x, t_n) = \int_0^{t_n} \int_0^s F_{tt}(\sigma) d\sigma ds
	= d_t^2 \sum_{i = 1}^{n} \sum_{j = 1}^i F_{tt}(\x, t_j), \quad n = 1, \dots, N_t.
	\label{6.4}
\end{equation}
All equations in \eqref{6.4} constitute a linear system, say 
\begin{equation}
\F(\x) = A \F_{tt}(\x)
\end{equation}
where $\F$ and $\F_{tt}$ are the vectors $(F(\x, t_n))_{n = 1}^{N_t}$ and $(F_{tt}(\x, t_n))_{n = 1}^{N_t}$ respectively and the matrix $A$ is given by
\begin{equation}
	A = \left[
		\begin{array}{cccc}
			1&0&\dots&0\\
			2&1&\dots&0\\
			\vdots&\vdots&\ddots&\vdots\\
			N_t & N_t -1 &\dots&1
		\end{array}
	\right].
\end{equation}
The vector $\F_{tt}(\x)$ is approximated by the solution of
\begin{equation*}
	(A^T A + \varepsilon \Id )\F_{tt}(\x) = \F(\x)
\end{equation*} where $\Id$ is the identity matrix
 and $\varepsilon > 0$ is a small number.
In the computation, we choose $\epsilon = 10^{-5}.$

\subsection{Implementation}\label{sec Implementation}
 
In the previous section, we solved the forward problem in the domain $\Omega_1$, which was spitted to $500 \times 500$ grid. The mesh restricted on $\Omega = (-0.5, 0.5)^2 \Subset \Omega_1$ is of size $85 \times 85.$ We thus reset $N = 85.$ 
The meshgrid for $\Omega \times [0, T]$ is the set $\{(\x_{m, n}), t_j\}_{1 \leq m, n \leq N, 1 \leq j \leq N_t}$. Recall that $N_t = 120.$
Without confusing, for any function $\phi$ defined in $\Omega \times [0, T]$, we identify $\phi$ by the tensor $(\phi(\x_{m, n}, t_j))_{1 \leq m, n \leq N, 1 \leq j \leq N_t}.$

We establish the tensor $\mathcal{L}$ so that for all function $\phi$, the function $\phi_{tt} - \Delta \phi - \phi(0) \frac{h_{tt}}{\tilde h}$ is approximated by $L \phi$. Based on finite difference, the entries of the tensor $\mathcal{L}$ is given by 
\begin{equation*}
\begin{array}{lll}
	\mathcal{L}_{m, n, j; m, n, j} = -\frac{2}{d_t^2}  + \frac{4}{d_{\x}^2} &\mathcal{L}_{m, n, j; m, n, j+1} = -\frac{1}{d_t^2} &\mathcal{L}_{m, n, j; m, n, j-1} = -\frac{1}{d_t^2}\\
	\mathcal{L}_{m, n, j; m-1, n, j} =\frac{1}{d_{\x}^2} &\mathcal{L}_{m, n, j; m+1, n, j} =\frac{1}{d_{\x}^2} &\mathcal{L}_{m, n, j; m, n - 1, j} =\frac{1}{d_{\x}^2}\\
	\mathcal{L}_{m, n, j; m, n + 1, j} =\frac{1}{d_{\x}^2} &\mathcal{L}_{m, n, j; m, n, 1} = -\frac{h_{tt}(\x_{m, n}, t_j)}{\tilde h(\x_{m, n}, t_j)}
\end{array}
\end{equation*}
for $2 \leq m, n \leq N - 1$ and $2 \leq j \leq N_t-1$. The other entries of $\calL$ are 0.

We next implement the constraints in \eqref{initial vel} with $\Psi = 0$ and \eqref{boundary}. 
\begin{enumerate}
	\item The constraint $w_{t}(\x, 0) = 0$ can be understood in the sense of finite difference as $\mathcal{D}_t w = 0$ where $(\mathcal{D}_t)_{m,n,1} = -\frac{1}{d_t}$, $(\mathcal{D}_t)_{m,n,2} = \frac{1}{d_t}$ and the other entries of $\mathcal{D}_t$ are $0$.
	\item The constraint $w(\x, t) = \zeta(\x, t)$ can be written as $\mathcal{D} w= \zeta(\x, t)$ where 
\begin{equation*}
\mathcal{D}_{m, n, j; m, n, j } = 
	\left\{ 
		\begin{array}{ll}
			1 & m \in \{1, N\} \mbox{ or } n \in \{1, N\}\\
			0 &\mbox{otherwise.}
		\end{array}		
	\right. 
\end{equation*} Here, we need to extend $\zeta(\x, t) = 0$ inside $\Omega.$
\item The constraint $\partial_{\nu} w(\x, t) = \xi(\x, t)$ can be written as $\mathcal{N} w= \xi(\x, t)$ where 
\begin{equation*}
\begin{array}{llr}
	\mathcal{N}_{1,n, j} = \frac{1}{d_{\x}} &\mathcal{N}_{2,n, j} = -\frac{1}{d_{\x}} &\quad 1 \leq n \leq N,\\
	\mathcal{N}_{N,n, j} = \frac{1}{d_{\x}} &\mathcal{N}_{N-1,n, j} = -\frac{1}{d_{\x}}& \quad 1 \leq n \leq N,\\
	\mathcal{N}_{m,1, j} = \frac{1}{d_{\x}}, &\mathcal{N}_{m,2, j} = -\frac{1}{d_{\x}} &\quad 1 \leq m \leq N,\\
	\mathcal{N}_{m,N, j} = \frac{1}{d_{\x}}, &\mathcal{N}_{m,N - 1, j} = -\frac{1}{d_{\x}} &\quad 1 \leq m \leq N
\end{array}	
\end{equation*}
The other entries of $\mathcal{N}$ are 0.
 Here, we need to extend $\xi(\x, t) = 0$ inside $\Omega.$
\end{enumerate}

Similarly, we  approximate the derivatives of the function $\phi$ by $D_x$, $D_y$ and $D_t$.  Here,
\begin{equation*}
	\begin{array}{rclrclr}
		(D_x)_{m,n,j;m,n,j} &=& -\frac{1}{d_{\x}} &(D_x)_{m,n,j;m+1,n,j} &=& -\frac{1}{d_{\x}} &1 \leq m \leq N-1\\
		(D_y)_{m,n,j;m,n,j} &=& -\frac{1}{d_{\x}} &(D_x)_{m,n,j;m,n+1,j} &=& -\frac{1}{d_{\x}} &1 \leq n \leq N - 1\\
		(D_t)_{m,n,j;m,n,j} &=& -\frac{1}{d_t} &(D_t)_{m,n,j;m,n,j+1} &=& -\frac{1}{d_t} &1 \leq j \leq N_t - 1
	\end{array}
\end{equation*}
Other entries of $D_x, D_y, D_t$ are 0.

It is convenient to identify the $6-$order tensors $\mathcal{L, D}$ and $\mathcal{N}$ above by matrices by assigning the triple index $m, n, j$ the single index
\begin{equation*}
	\mathfrak{i}  = (m-1)N N_t + (n - 1) N_t + j. 
\end{equation*}
 By this, instead of approximating a function $\phi$ by a $3-$order tensor $(\phi(\x_{m, n}, t_j))$, we consider function $\phi$ by the vector $\phi(\xi_{\mathfrak{i}}).$
This ``line up" technique is employed because operators on multi-dimensional tensors are not supported in Matlab. 

Hence a function $w$ satisfies equation \eqref{problem for w}, \eqref{initial vel} and \eqref{boundary} if and only if
\begin{equation}
	\mathcal{C} w = \mathfrak{b}
		\label{6.13}
\end{equation}
where 
\begin{equation*}
	\mathcal{C} = \left[
		\begin{array}{c}
			\mathcal{D}_t\\
			\mathcal{D}\\
			\mathcal{N}\\
			\mathcal{L}
		\end{array}
	\right] \quad \mathfrak{b} = \left[
		\begin{array}{c}
			\vec{0}\\
			\zeta\\
			\xi\\
			\vec{0}
		\end{array}
	\right], \quad \mbox{and } \vec{0} \mbox{ is the zero vector in } \R^{N^2 N_t}.
\end{equation*}
Since the data $F$ and $G$ are noisy, \eqref{6.13} might have no solution. Instead, we solve
\begin{equation}
	(\mathcal{C}^T \mathcal{C} + \varepsilon_1 \Id + \varepsilon_2 (D_x^TD_x + D_y^TD_y + D_t^TD_t))w = \mathfrak{b}.
	\label{6.14}
\end{equation}
In our computation $\varepsilon_1 = 3\cdot10^{-3}$ and $\varepsilon_2 = 1.5\cdot10^{-4}.$

 \begin{remark}
 	Solving \eqref{6.14} is somewhat equivalent to finding the regularized solution of \eqref{problem for w}, \eqref{initial vel} and \eqref{boundary}. 
	In fact, the minimizer of $\J_{\alpha}$ is the solution of the equation $D \J_{\alpha} w = 0$ and $D\J_{\alpha} w$ is actually $\calL^T \calL + \alpha I^T I$ where $|Iw|$ gives the $H^3$ norm of of $w$. Here are some differences of the numerical implementation from the analysis in Sections \ref{sec QR} and \ref{sec regu}:
	\begin{enumerate}
	\item A small difference of solving \eqref{6.14} and minimizing $\J_{\alpha}$ is that we reduce the $H^3$ norm to be $H^1$ norm in the regularization term because this regularization term is already provide good reconstruction results.
	\item In theory, we set the regularization parameter by a single number $\alpha$ but in computation, we choose the regularization parameters as $\varepsilon_1$ and $\varepsilon_2$ above by a trial and error procedure. 
	\item The more important fact is that since \eqref{6.14} is uniquely solvable, it is not necessary to verify the condition $K$ is non-empty. See Remark \ref{rem 4.1}.
	\end{enumerate}
 \end{remark}
 
\subsection{Numerical examples}

To illustrate the efficiency of the method, we display here some numerical results. Set
\[
	h(\x, t) = 1 + \exp(-(4 + |\x|^2) t) \quad (\x, t) \in \R^d \times [0, T].
\]

We test four (4) models, described below. In Figures \ref{fig model 1}--\ref{fig model 4}, the notation $p_{\rm comp}$ indicates the computed source functions.

\noindent {\bf Test 1.} The true function $p^*(\x)$ is given by
	\[
		p^*({\x}) = \left\{
			\begin{array}{ll}
				1 & \mbox{if }4(x-0.15)^2 + y^2/8 \leq 0.1^2,\\
				-1&\mbox{if} \max\{|x + 0.15|, |y|\} < 0.1,\\
				0 &\mbox{otherwise}.			
			\end{array}
		\right.
	\] With this function, the source involves both negative and positive parts. The reconstructed results with various noise levels are displayed in Figure \ref{fig model 1}. 
\begin{figure}[]
	\begin{center}
	\subfloat[The function $p_{\rm true}$]{\includegraphics[width = 0.42\textwidth]{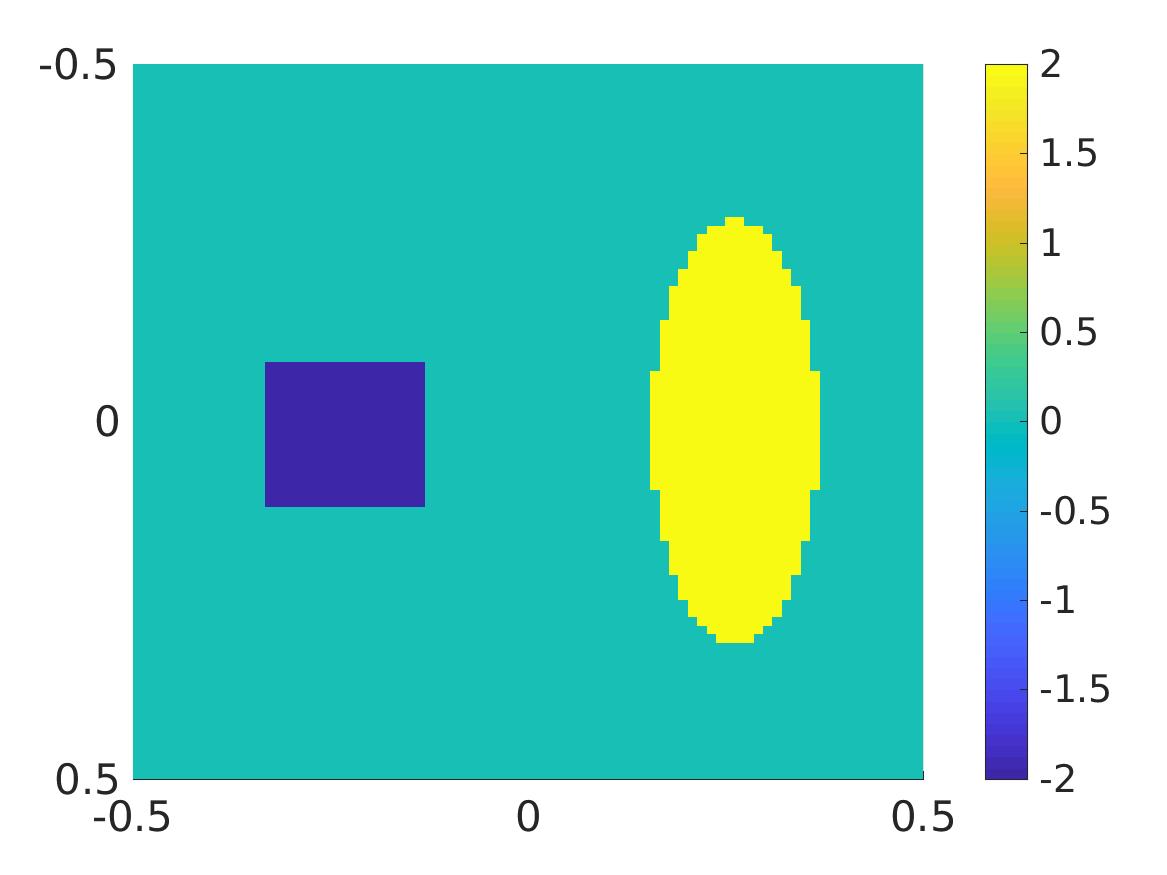}}\quad 
	\subfloat[The function $p_{\rm comp}$ when $\delta = 2\%.$]{\includegraphics[width = 0.42\textwidth]{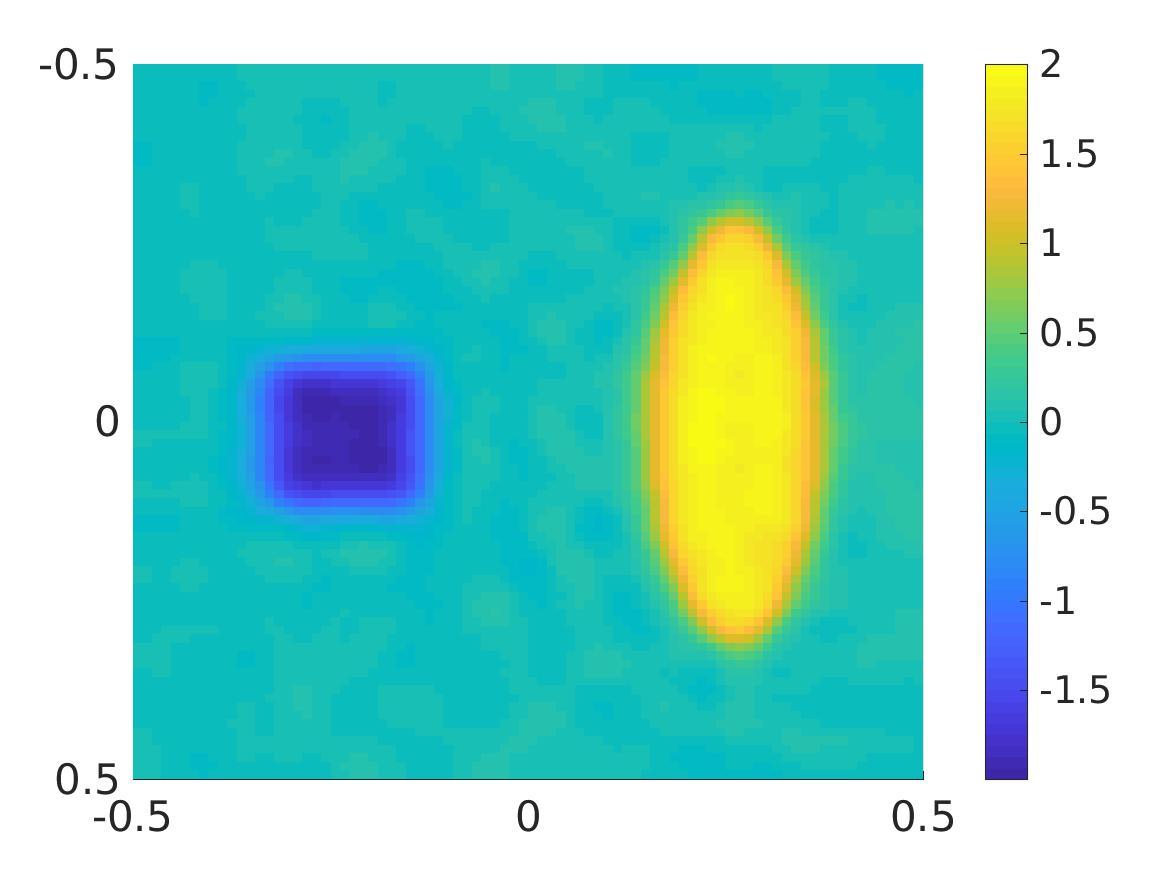}} 
	
		\subfloat[The function $p_{\rm comp}$ when $\delta = 5\%.$]{\includegraphics[width = 0.42\textwidth]{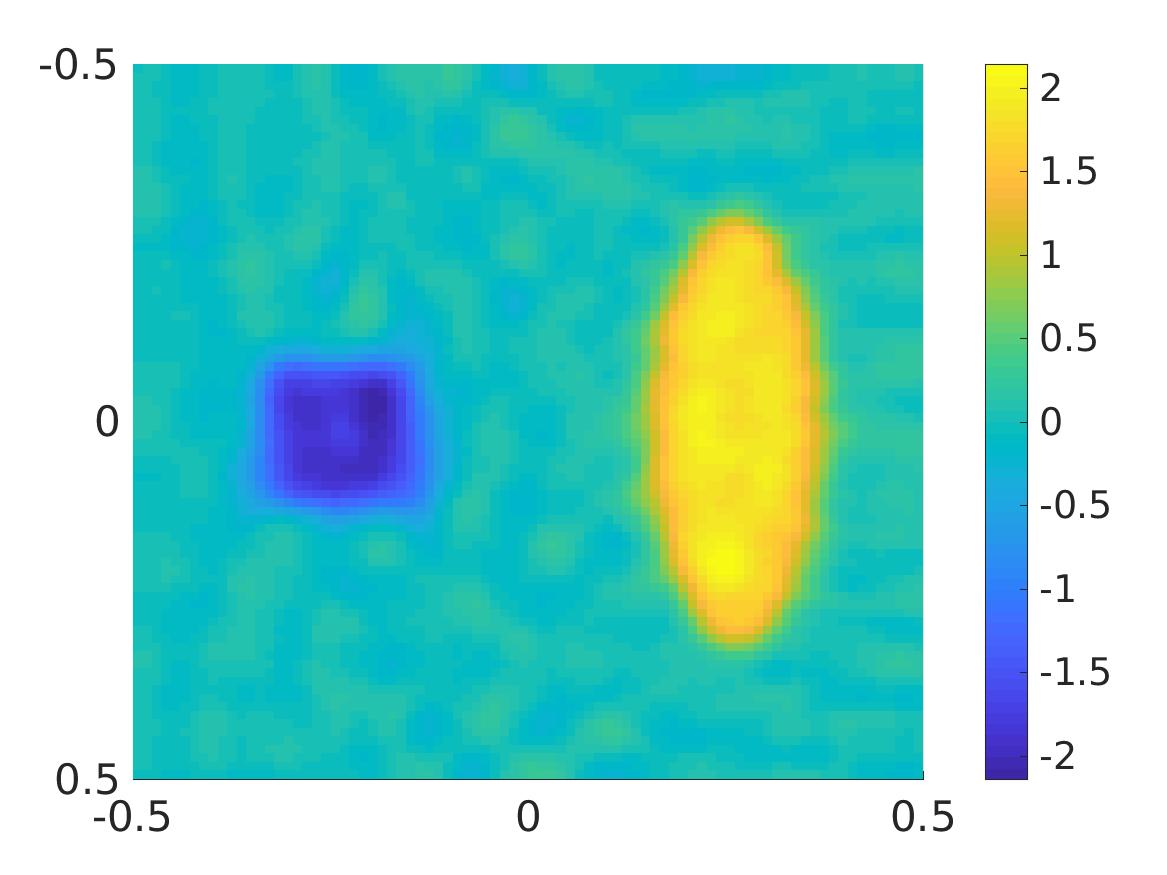}} \quad
	\subfloat[The function $p_{\rm comp}$ when $\delta = 10\%.$]{\includegraphics[width = 0.42\textwidth]{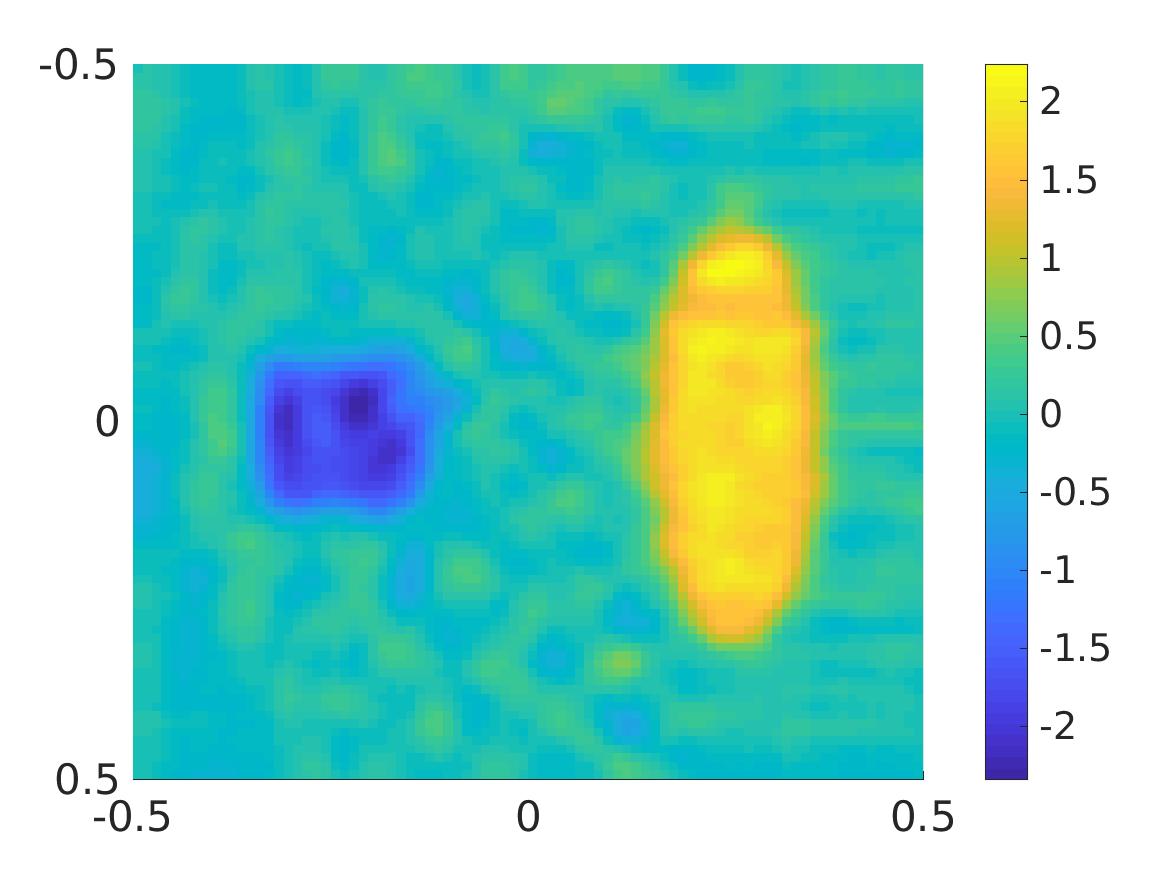}}
	
	\subfloat[The functions $p_{\rm comp}$ (dash-dot) and $p_{\rm true}$ (solid) on the line $y = 0$ when $\delta = 5\%$]{\includegraphics[width = 0.42\textwidth]{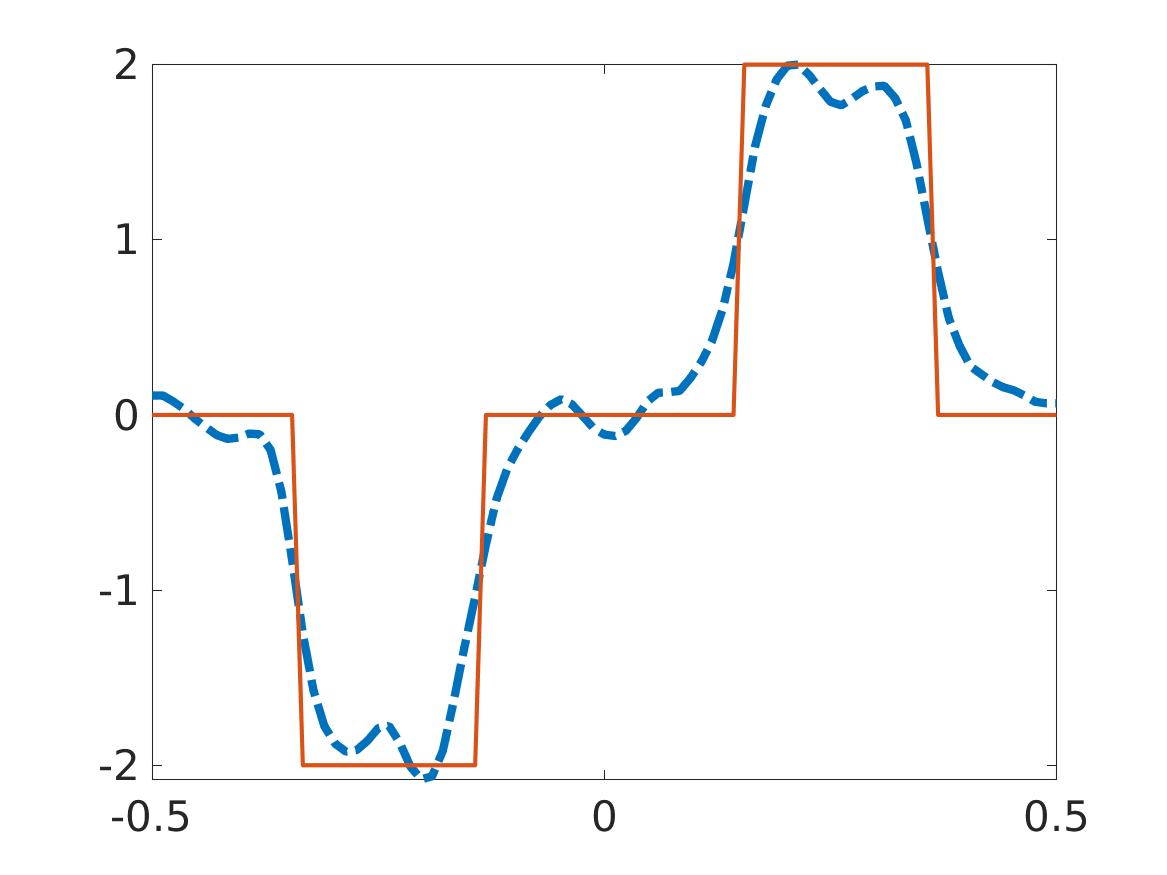}}  \quad
	\subfloat[The functions $p_{\rm comp}$ (dash-dot) and $p_{\rm true}$ (solid) on the line $y = 0$ when $\delta = 10\%$]{\includegraphics[width = 0.42\textwidth]{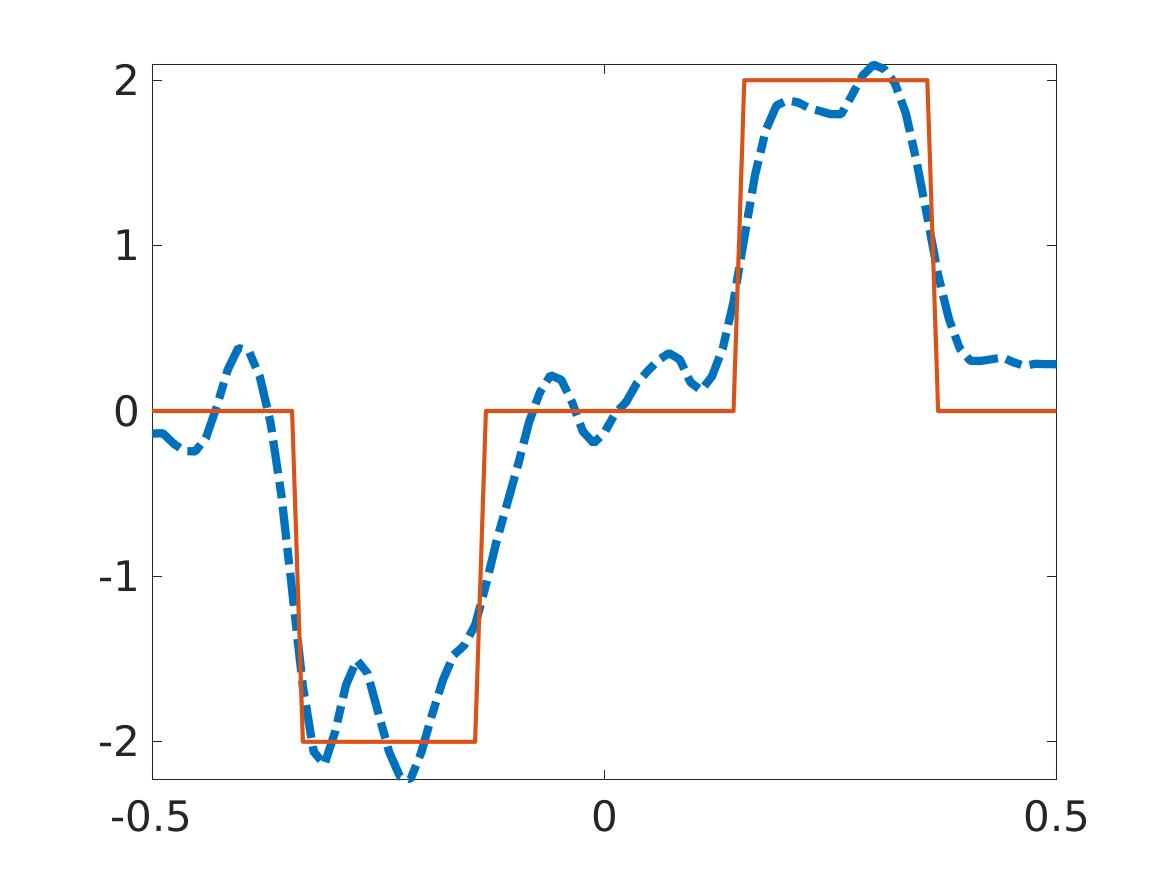}}
	\caption{\label{fig model 1} \it The true and reconstructed source functions in Test 1 where the data is noisy. It is evident that the shape and values of the source can be reconstructed with a high accuracy.}
	\end{center}
\end{figure}

\noindent {\bf Test 2.} The true function $p^*(\x)$ is given by
	\[
		p^*({\x}) = \left\{
			\begin{array}{rl}
				1.5 & \mbox{if } (x - 0.25)^2 + y^2 < 0.12^2,\\
				1&\mbox{if } 4x^2 + (y + 0.25)^2 < 0.15^2,\\
				-1&\mbox{if } |x+0.25| + |y| < 0.17,\\
				0 &\mbox{otherwise}.			
			\end{array}
		\right.
	\] With this function, the source involves three ``inclusions" with different values. The reconstructed results with various noise levels are displayed in Figure \ref{fig model 2}. 
\begin{figure}[]
	\begin{center}
	\subfloat[The function $p_{\rm true}$]{\includegraphics[width = 0.42\textwidth]{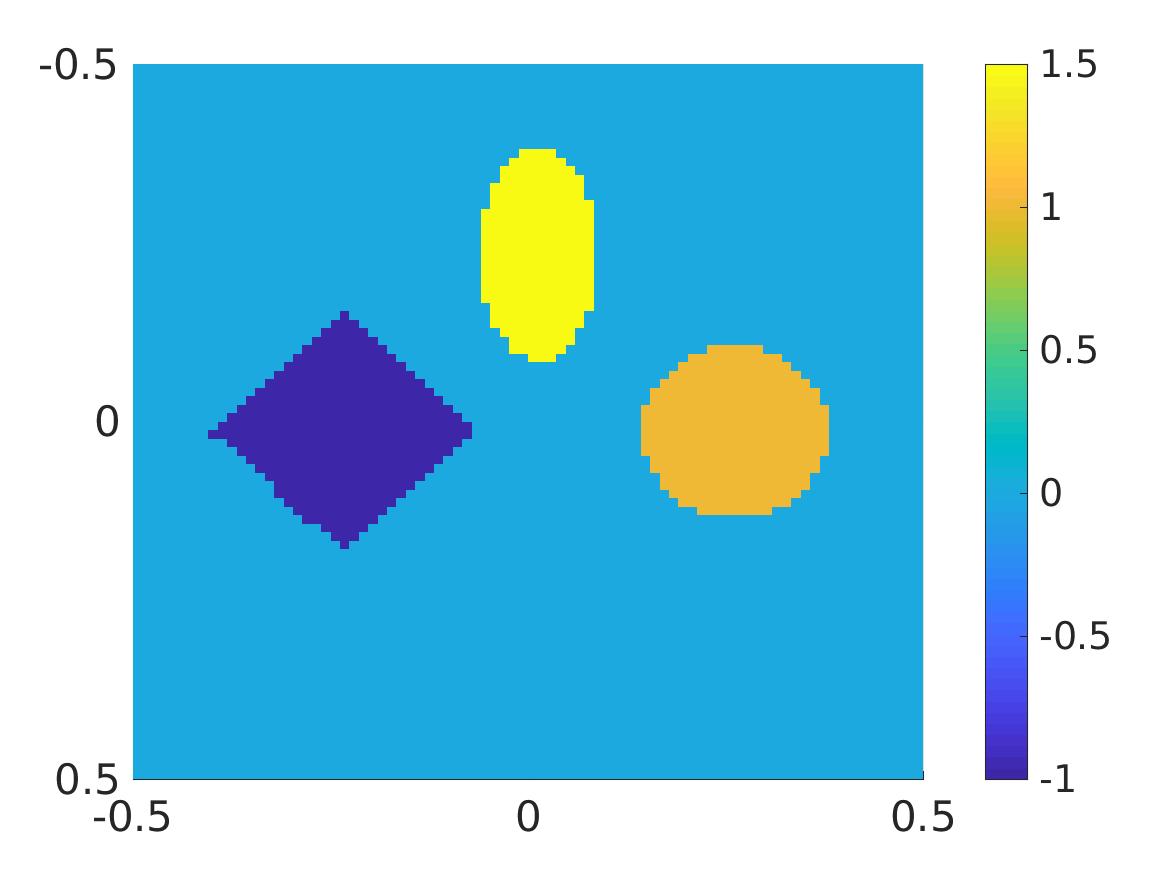}} \quad
	\subfloat[The function $p_{\rm comp}$ when $\delta = 2\%.$]{\includegraphics[width = 0.42\textwidth]{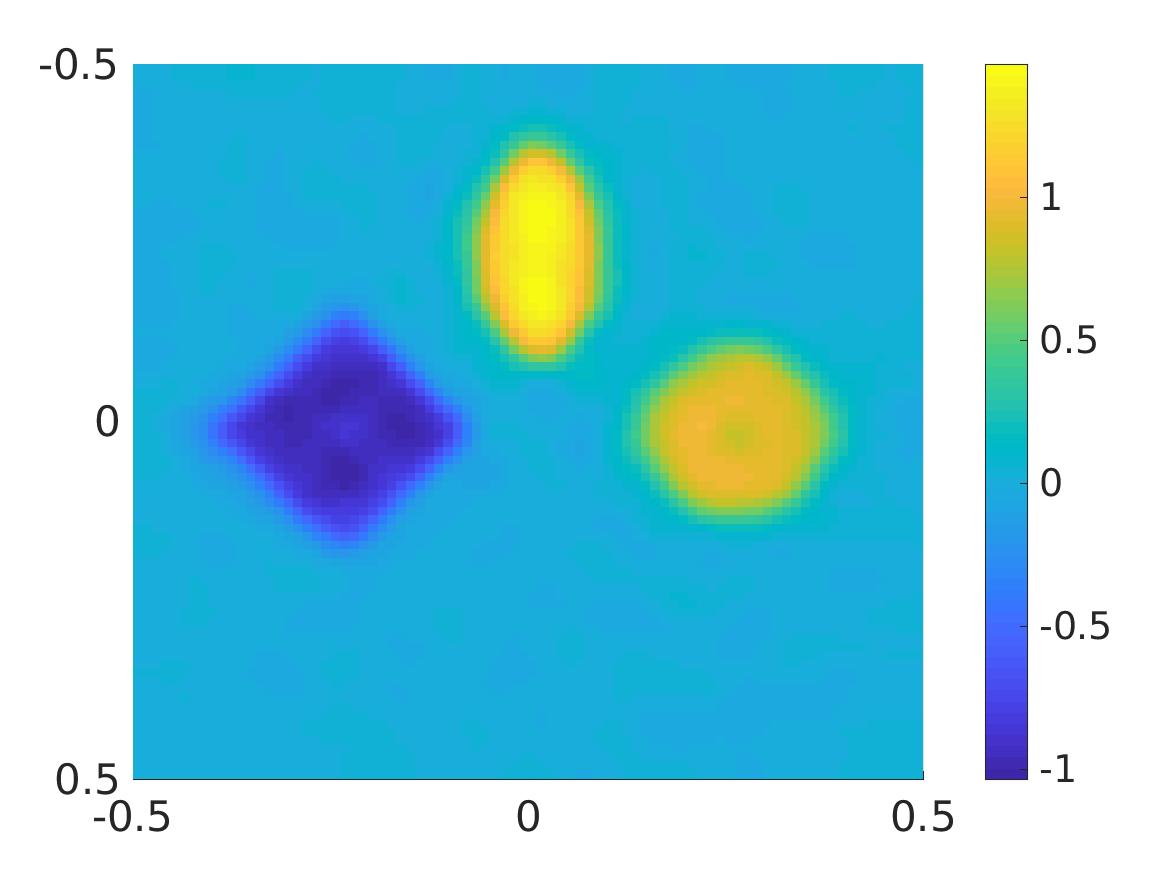}} 
	
		\subfloat[The function $p_{\rm comp}$ when $\delta = 5\%.$]{\includegraphics[width = 0.42\textwidth]{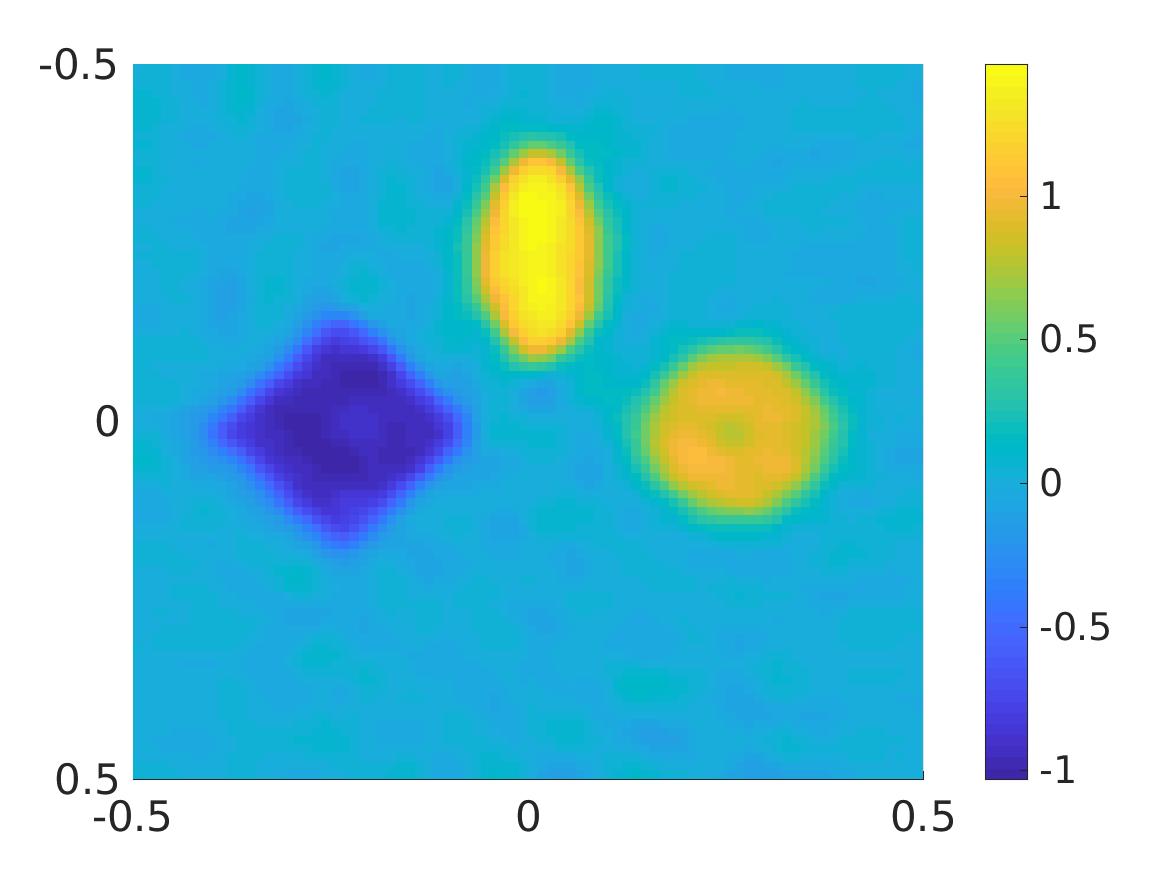}} \quad
	\subfloat[The function $p_{\rm comp}$ when $\delta = 10\%.$]{\includegraphics[width = 0.42\textwidth]{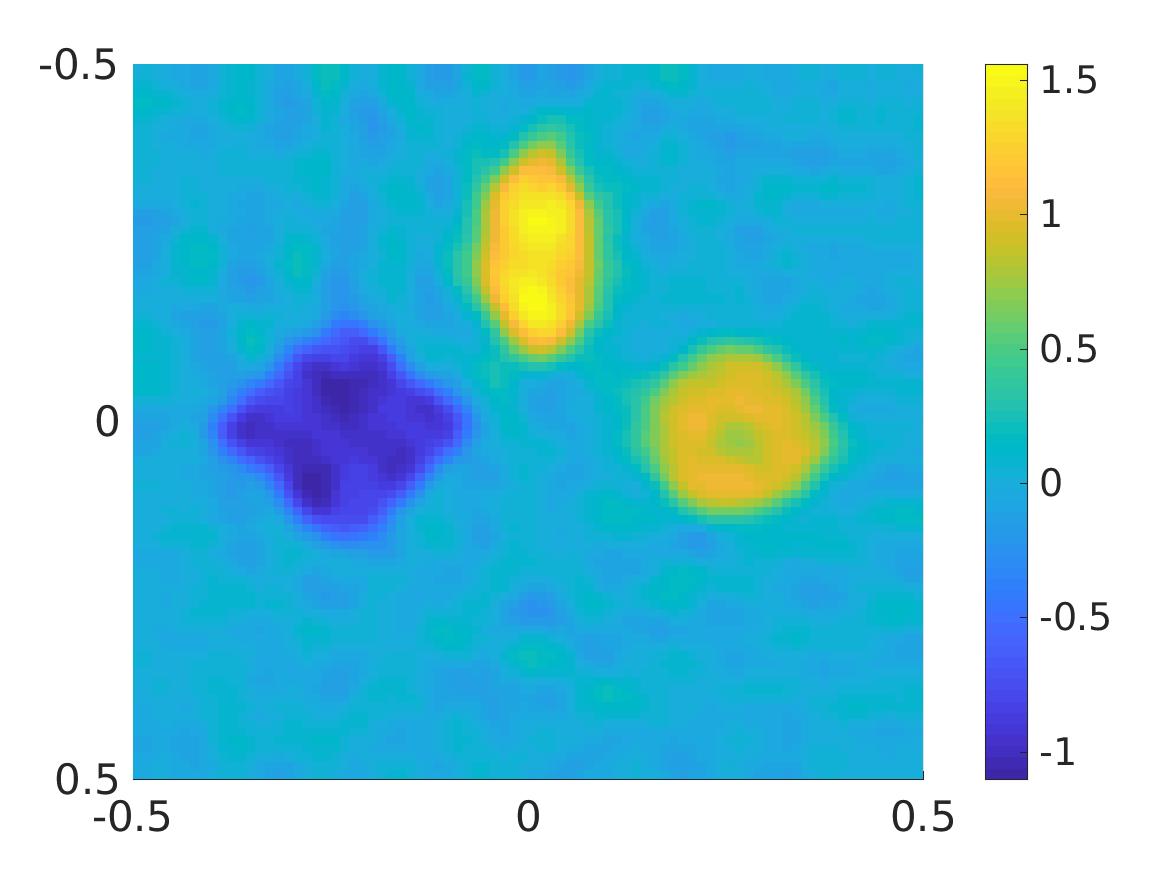}}
	
	\subfloat[The functions $p_{\rm comp}$ (dash-dot) and $p_{\rm true}$ (solid) on the line $y = 0$ when $\delta = 5\%$]{\includegraphics[width = 0.42\textwidth]{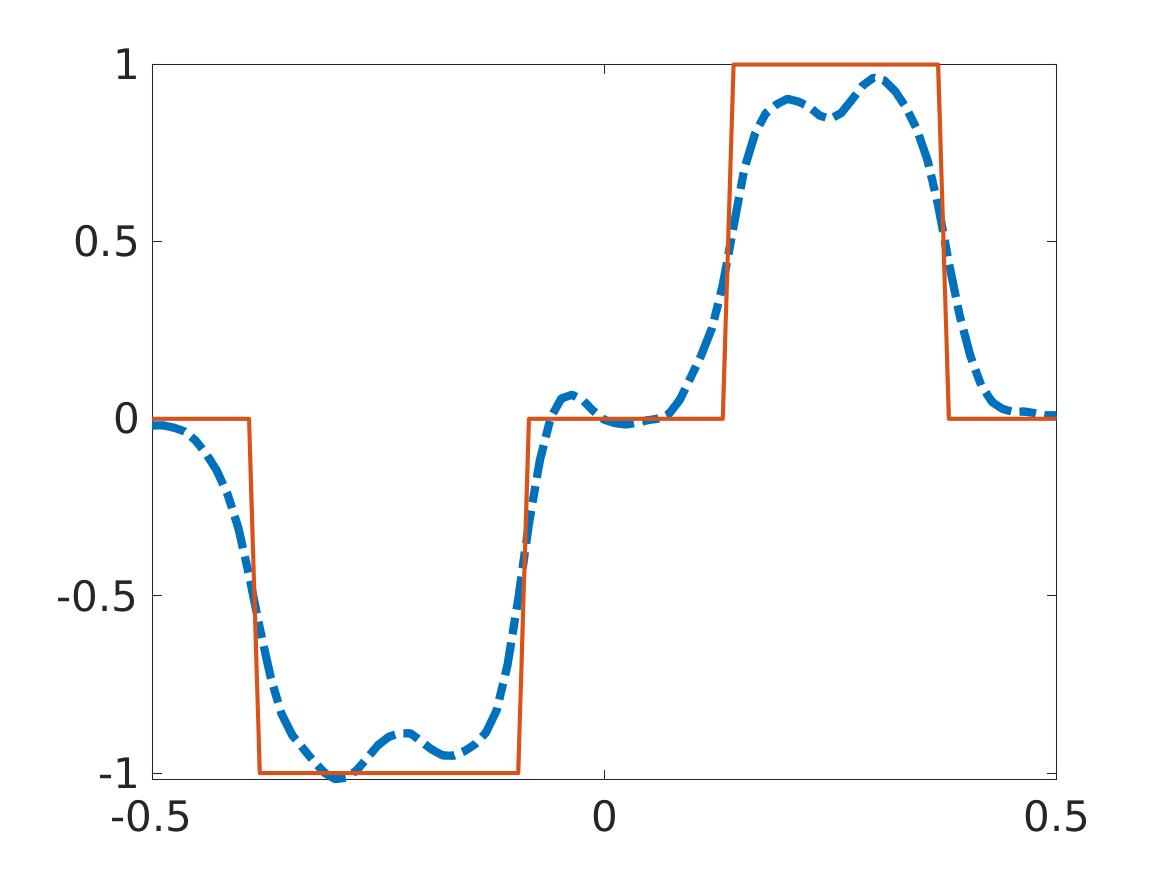}}  \quad
	\subfloat[The functions $p_{\rm comp}$ (dash-dot) and $p_{\rm true}$ (solid) on the line $y = 0$ when $\delta = 10\%$]{\includegraphics[width = 0.42\textwidth]{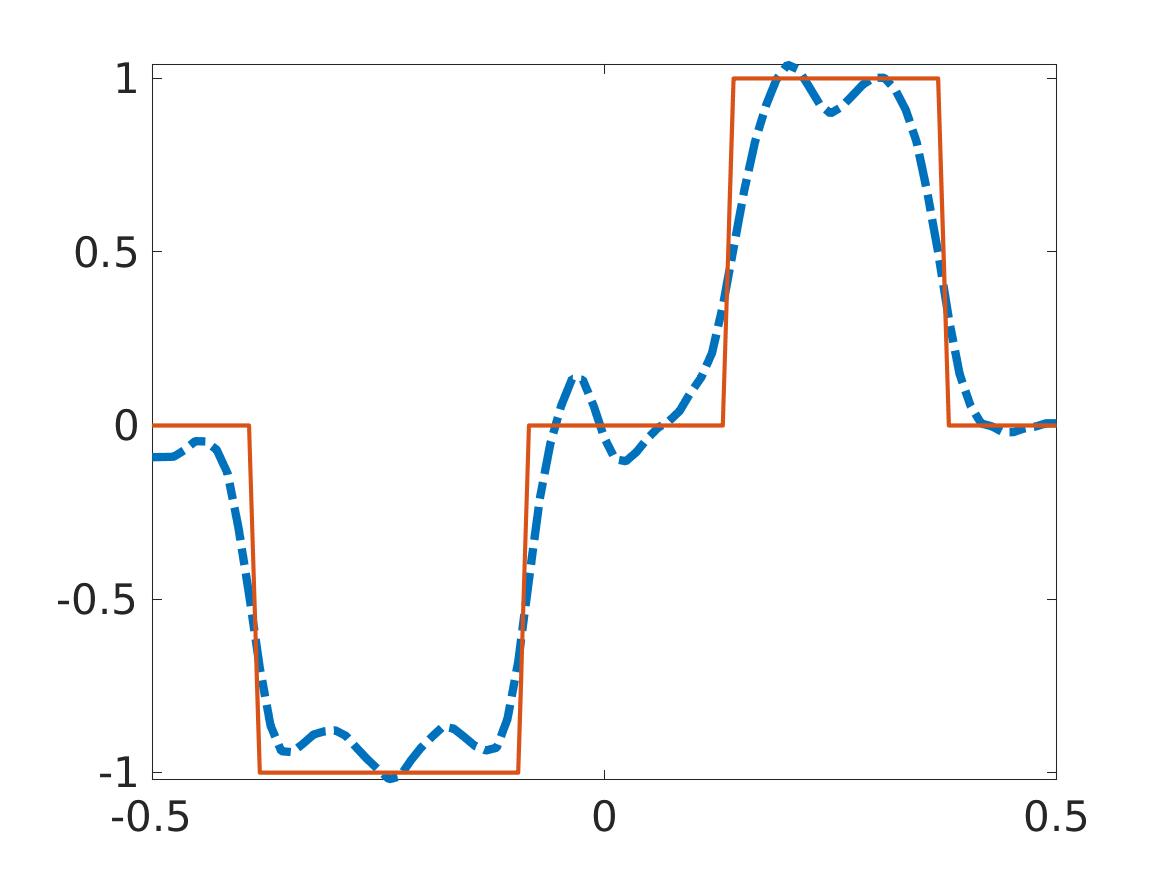}}
	\caption{\label{fig model 2} \it The true and reconstructed source functions in Test 2 where the data is noisy. Similarly to the case of Test 1, the shape and values of the source are reconstructed very well.}
	\end{center}
\end{figure}

\noindent {\bf Test 3.} The true function $p^*(\x)$ is a smooth function given by
\begin{equation*}
	p^*(\x) = 3(1-x)^2 \exp(-x^2 - (y+1)^2) 
   \\- 10\left(\frac{x}{5} - x^3 - y^5\right)\exp(-x^2-y^2) 
   - \frac{1}{3}\exp(-(x+1)^2 - y^2)
\end{equation*}
   for $\x = (x, y) \in \Omega.$ This function is named as ``peaks" in Matlab, which has several local maxima and minima. The reconstructed results are displayed in Figure \ref{fig model 3}.   
\begin{figure}[]
	\begin{center}
	\subfloat[The function $p_{\rm true}$]{\includegraphics[width = 0.42\textwidth]{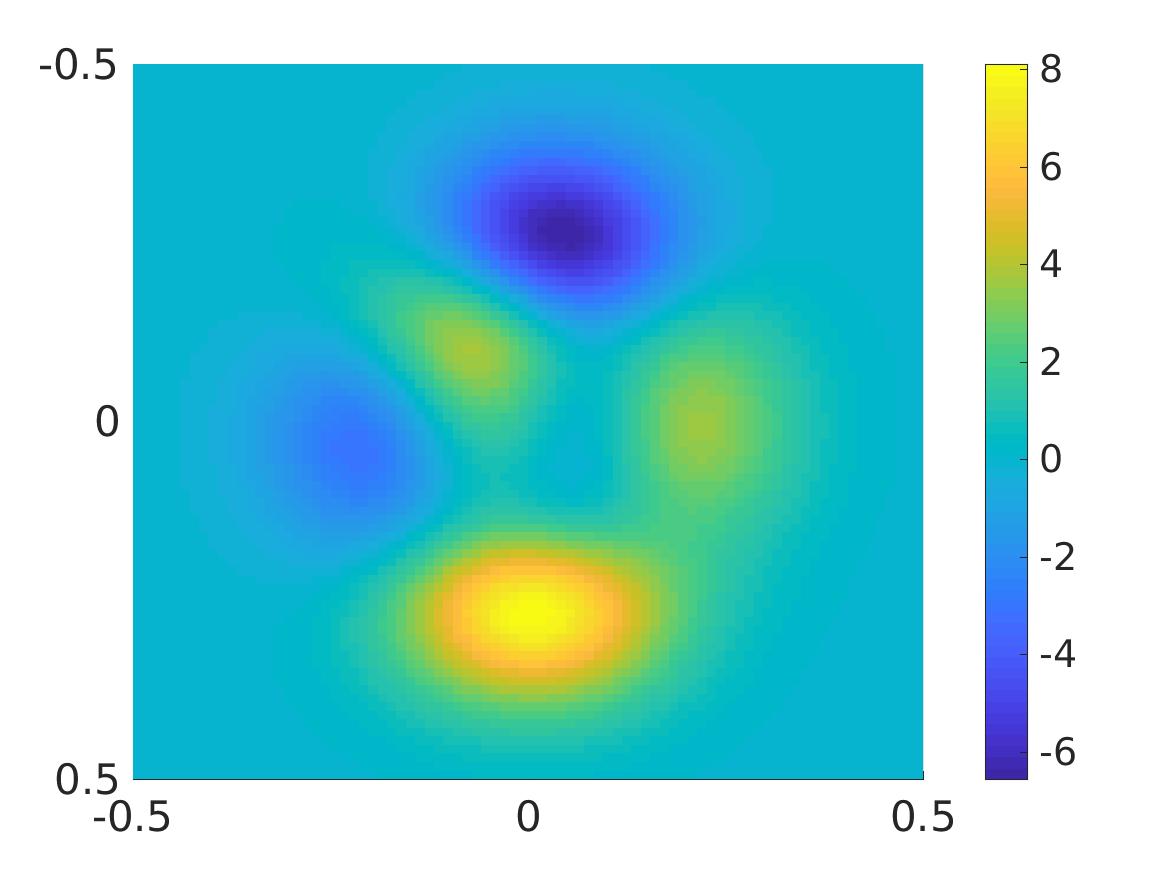}} \quad
	\subfloat[The function $p_{\rm comp}$ when $\delta = 2\%.$]{\includegraphics[width = 0.42\textwidth]{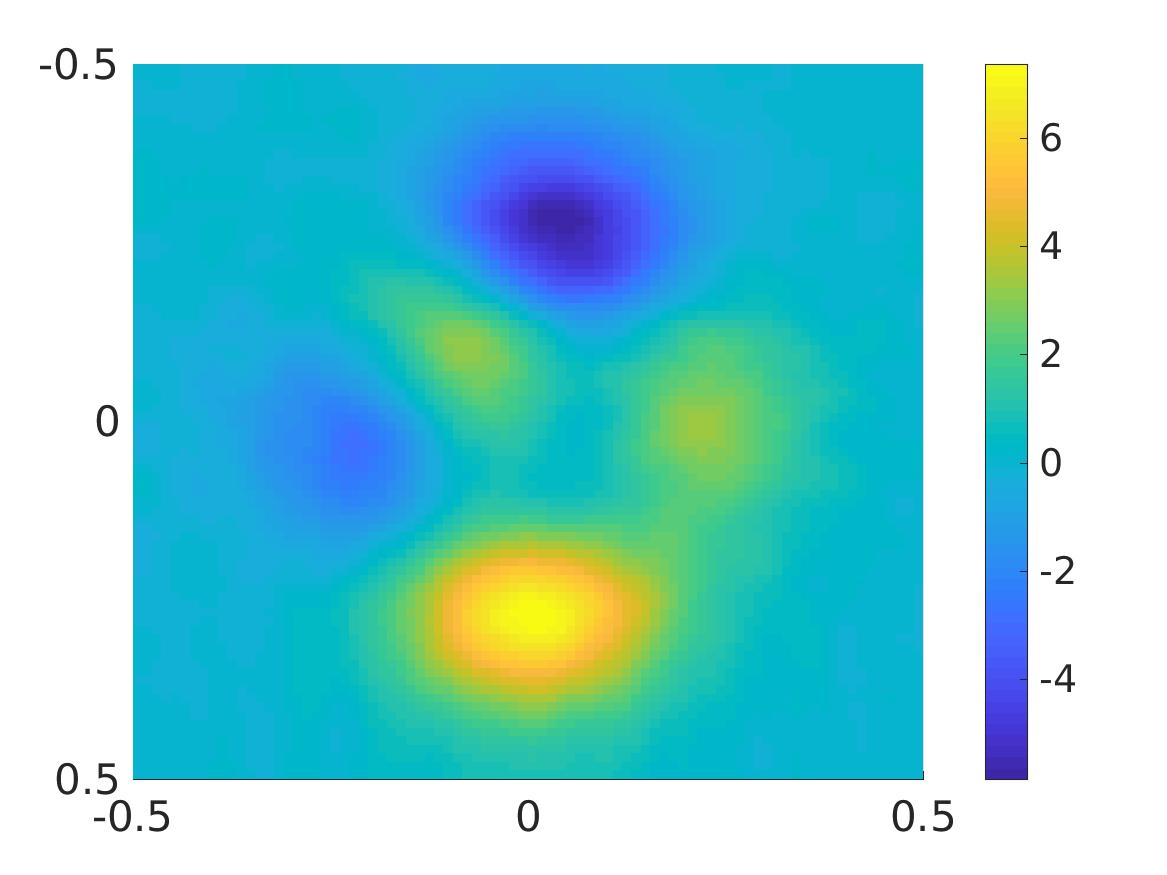}} 
	
		\subfloat[The function $p_{\rm comp}$ when $\delta = 5\%.$]{\includegraphics[width = 0.42\textwidth]{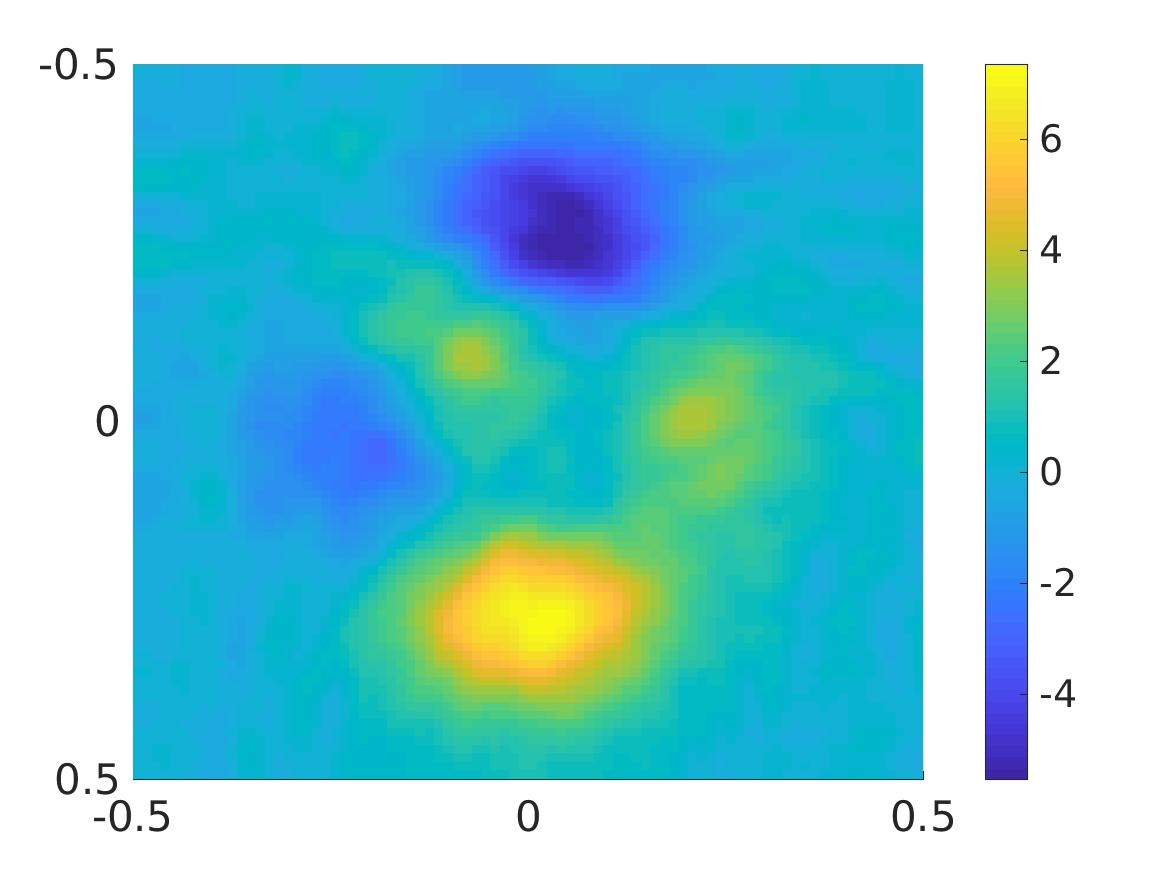}} \quad
	\subfloat[The function $p_{\rm comp}$ when $\delta = 10\%.$]{\includegraphics[width = 0.42\textwidth]{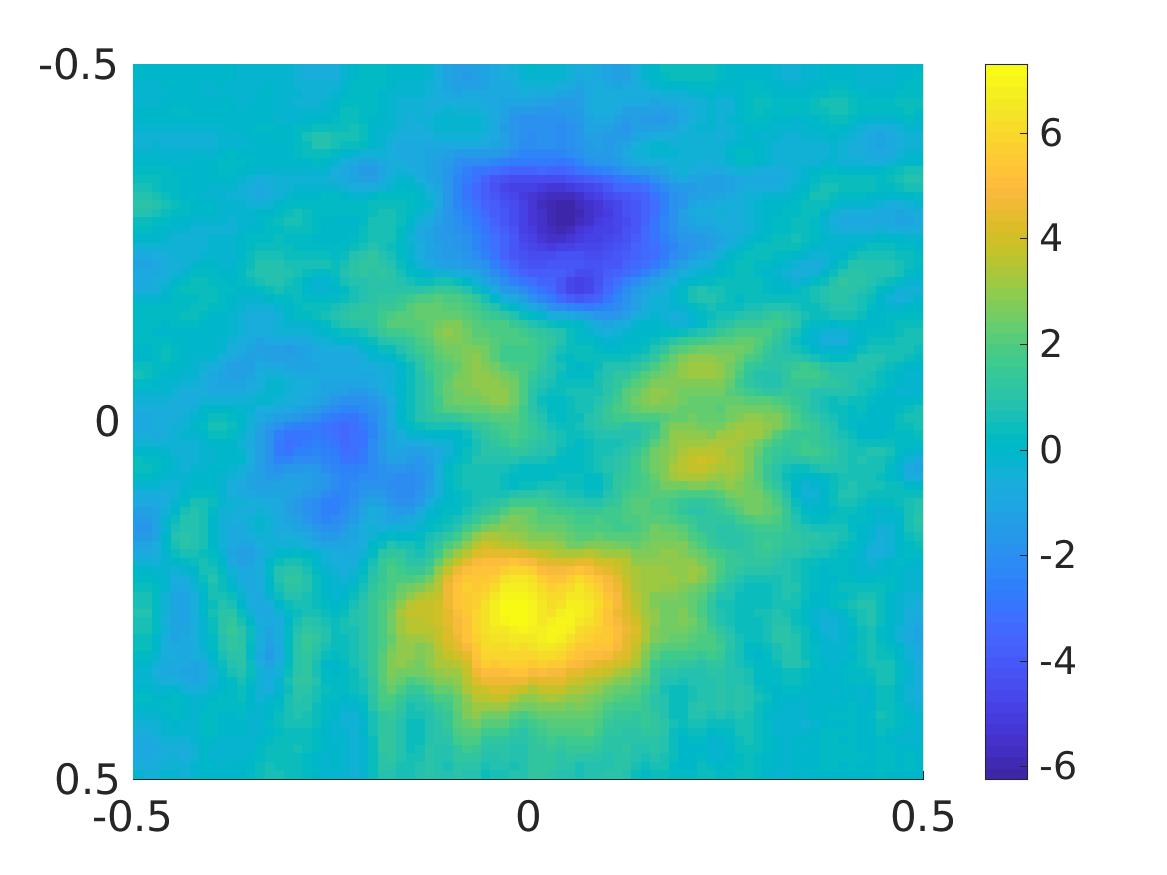}}
	
	\subfloat[The functions $p_{\rm comp}$ (dash-dot) and $p_{\rm true}$ (solid) on the line $y = 0$ when $\delta = 5\%$]{\includegraphics[width = 0.42\textwidth]{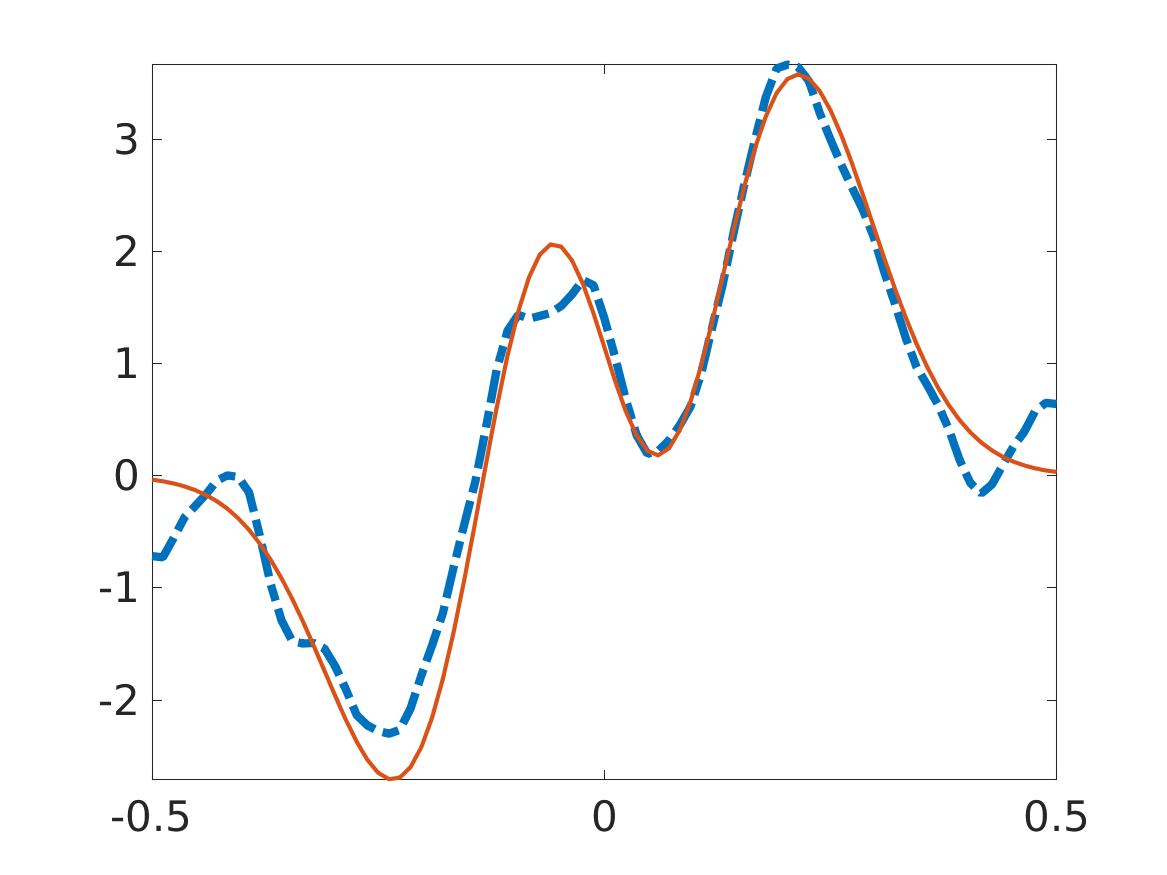}}  \quad
	\subfloat[The functions $p_{\rm comp}$ (dash-dot) and $p_{\rm true}$ (solid) on the line $y = 0$ when $\delta = 10\%$]{\includegraphics[width = 0.42\textwidth]{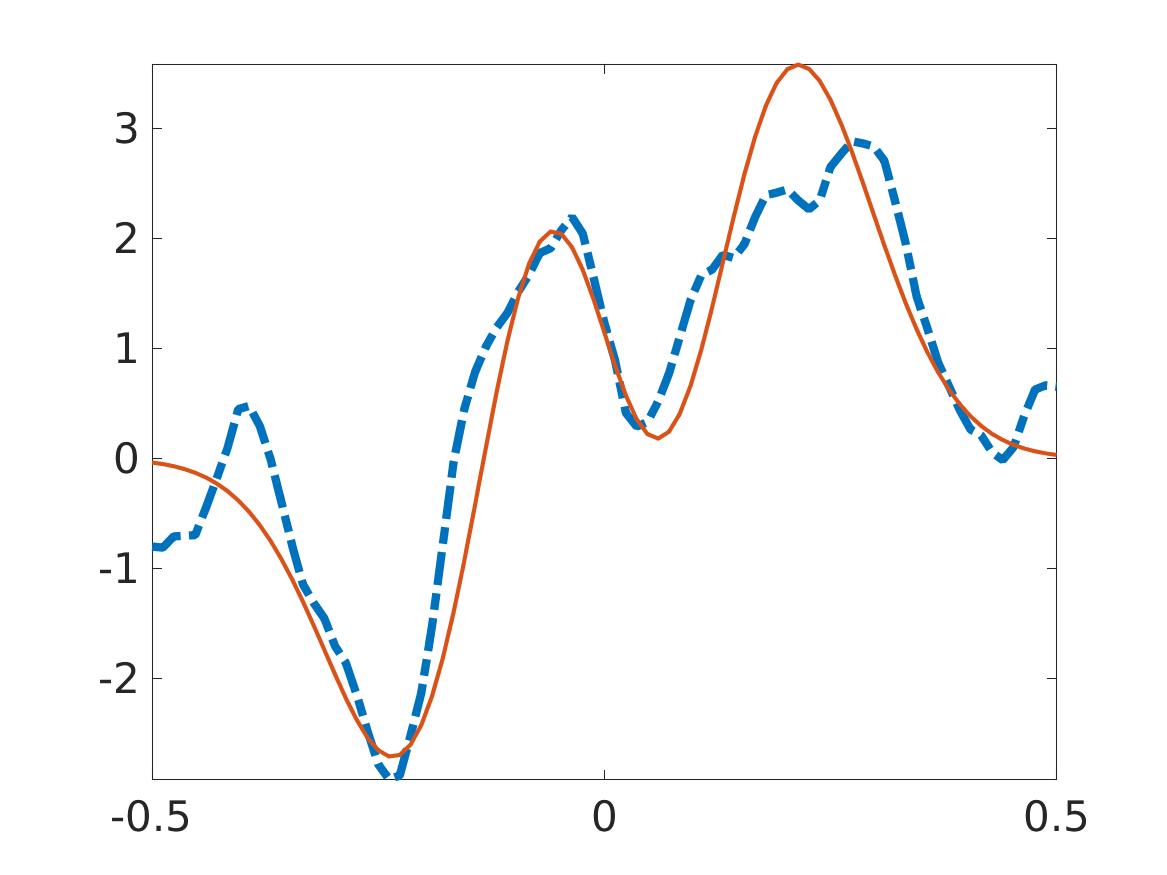}}
	\caption{\label{fig model 3} \it The true and reconstructed source functions in Test 3. The function $p^*$ has several local maxima and minima, which can be reconstructed efficiently.}
	\end{center}
\end{figure}

\noindent {\bf Test 4.} We try a non-smooth function $p^*(\x)$ as the ``negative" characteristic function of a letter ``A" and the ``positive" characteristic function of a letter $L$. It is interesting to see that our method recovers this step function very well, see Figure \ref{fig model 4}.

\begin{figure}[]
	\begin{center}
	\subfloat[The function $p_{\rm true}$]{\includegraphics[width = 0.42\textwidth]{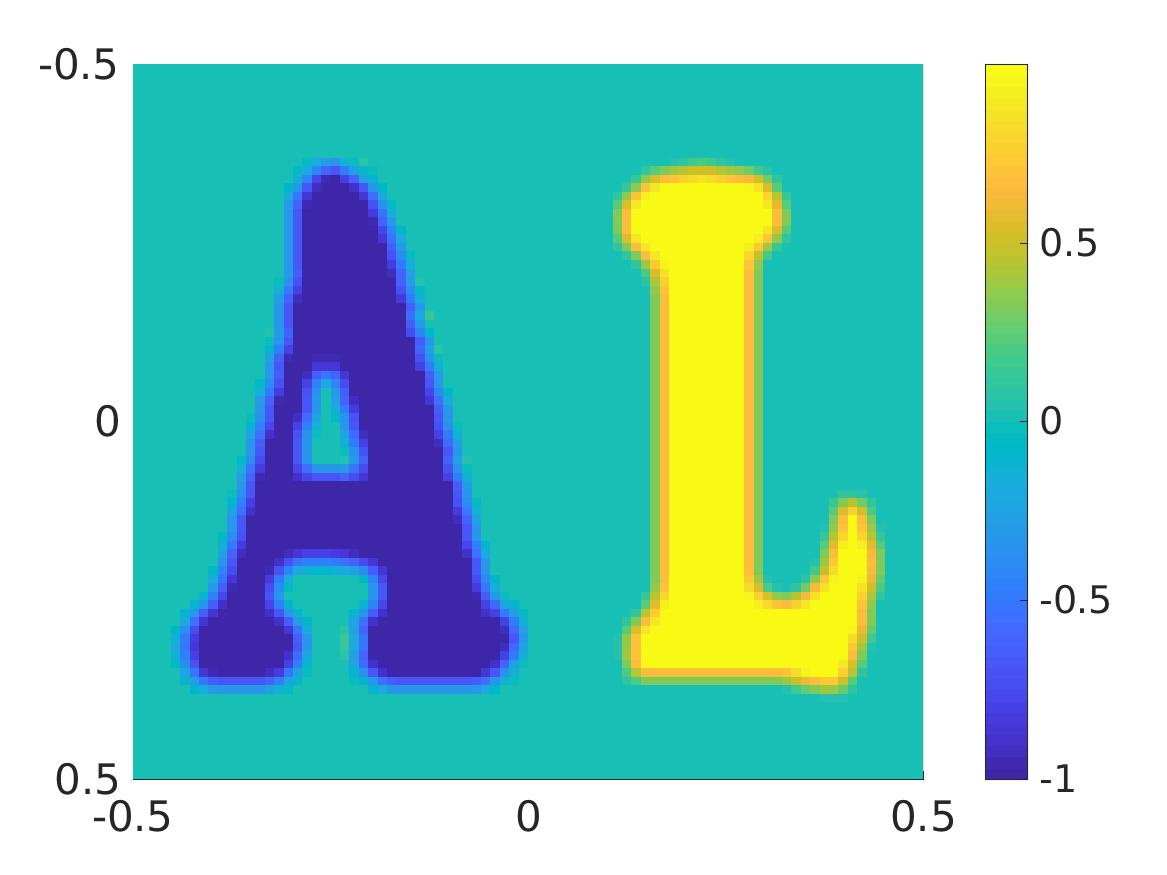}}  \quad
	\subfloat[The function $p_{\rm comp}$ when $\delta = 2\%.$]{\includegraphics[width = 0.42\textwidth]{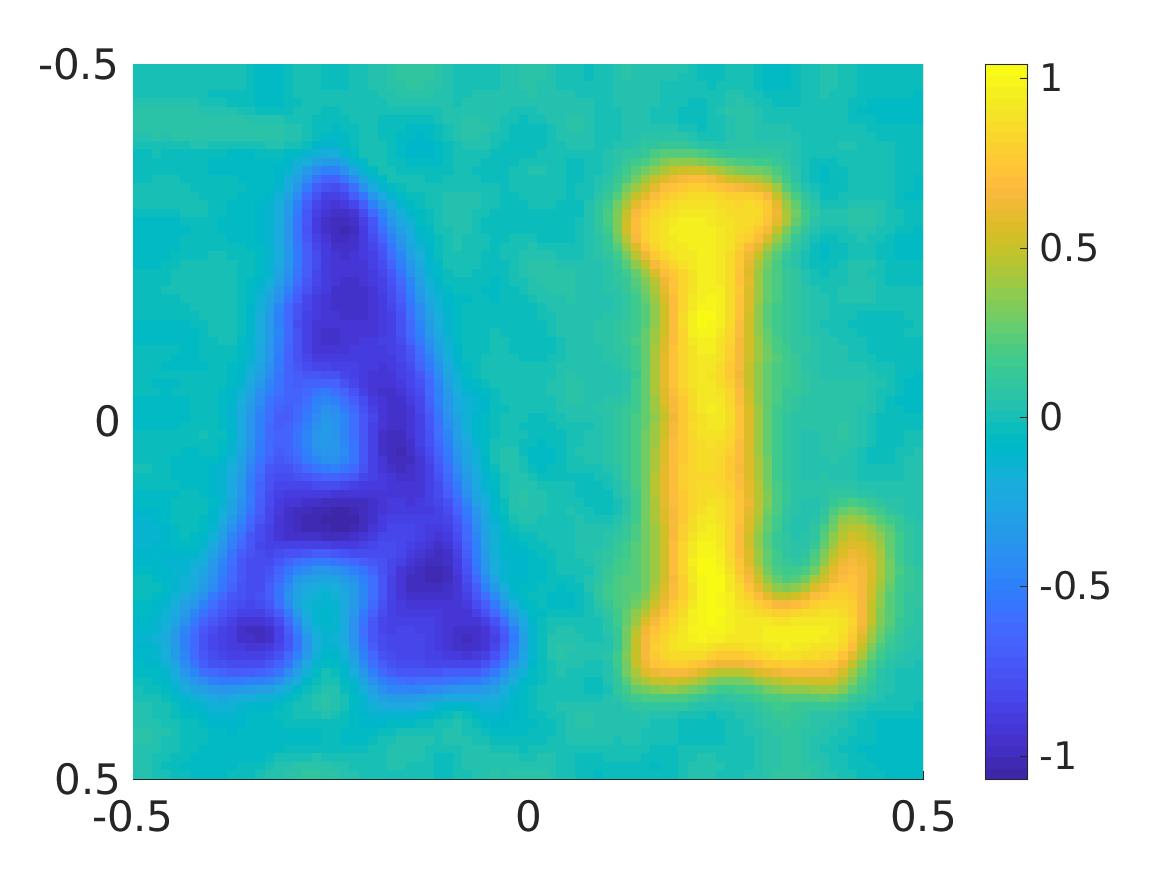}} 
	
		\subfloat[The function $p_{\rm comp}$ when $\delta = 5\%.$]{\includegraphics[width = 0.42\textwidth]{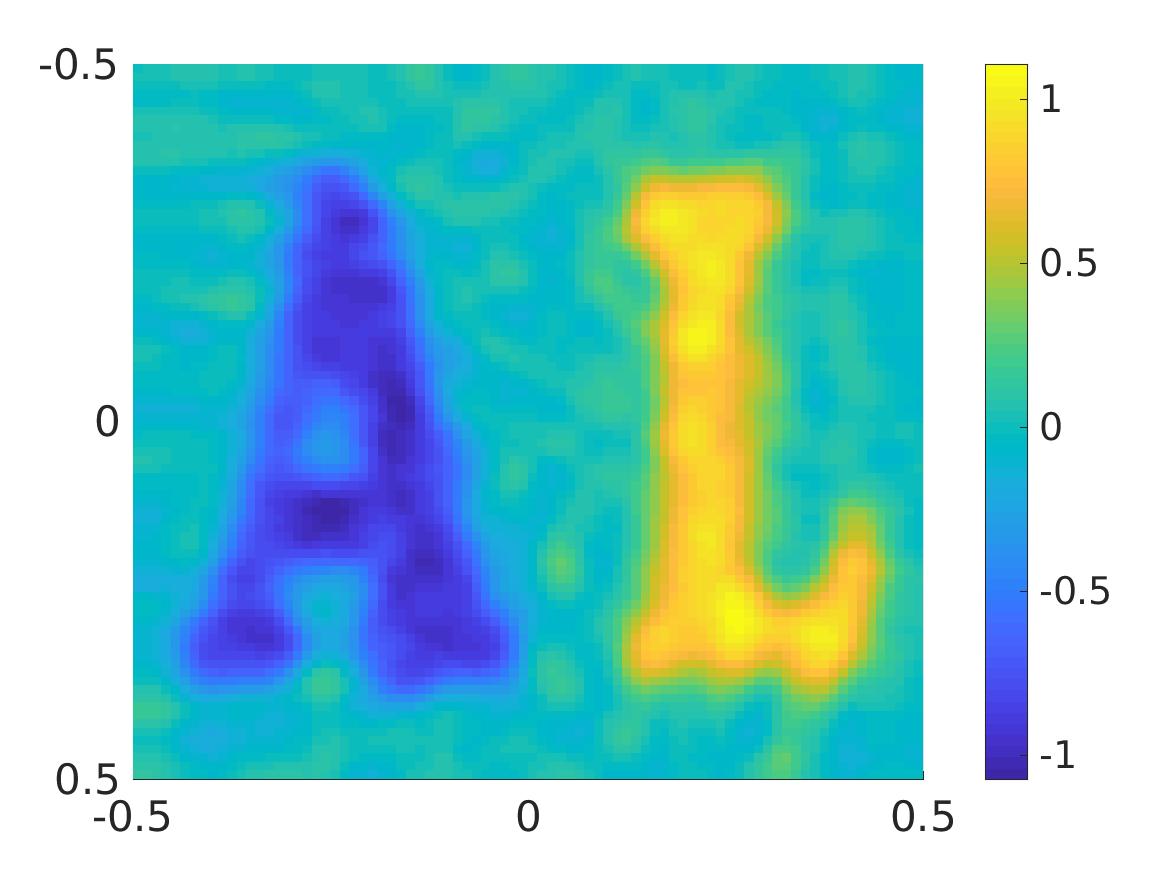}} \quad
	\subfloat[The function $p_{\rm comp}$ when $\delta = 10\%.$]{\includegraphics[width = 0.42\textwidth]{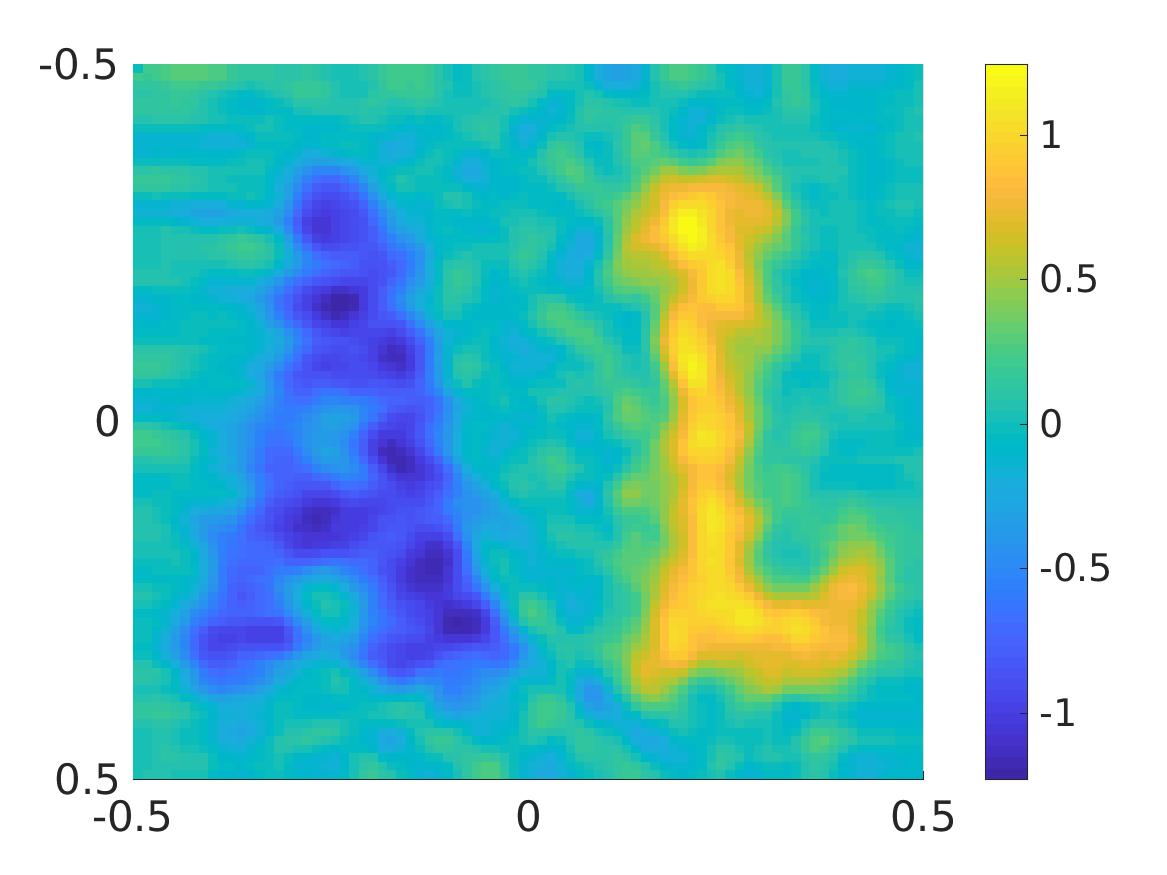}}
	
	\subfloat[The functions $p_{\rm comp}$ (dash-dot) and $p_{\rm true}$ (solid) on the line $y = 0$ when $\delta = 5\%$]{\includegraphics[width = 0.42\textwidth]{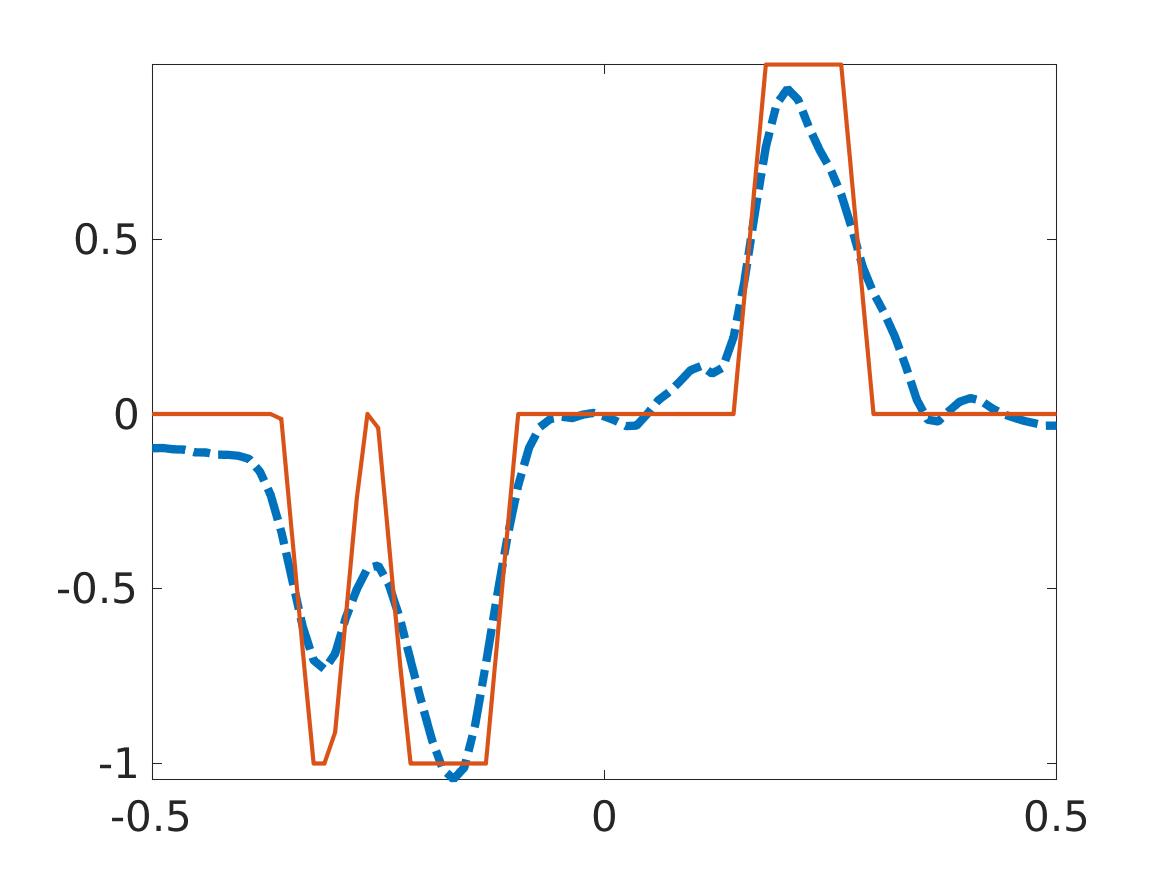}}  \quad
	\subfloat[The functions $p_{\rm comp}$ (dash-dot) and $p_{\rm true}$ (solid) on the line $y = 0$ when $\delta = 10\%$]{\includegraphics[width = 0.42\textwidth]{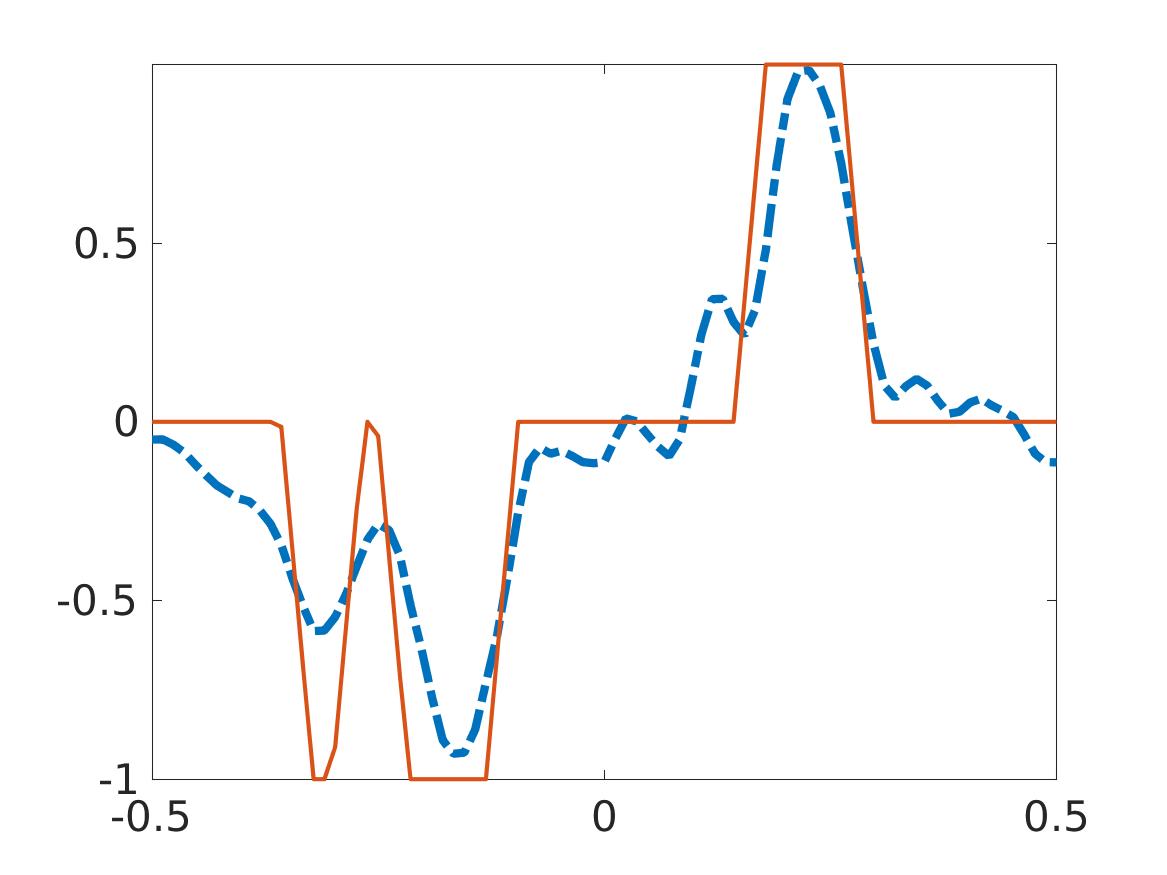}}
	\caption{\label{fig model 4} \it The true and reconstructed source functions in Test 4 where the data is noisy.  The reconstructions of nonconvex ``inclusions" $A$ and $L$ are satisfactory even when $\delta = 10\%$.}
	\end{center}
\end{figure}

\begin{remark}
We observe that the higher noise level, the blurrier reconstruction is. However, the reconstructed value of the function is stable. This can be seen by the color bar in Figures \ref{fig model 1}--\ref{fig model 4}. We list here the comparison of the minimum and maximum values of the true and reconstructed sources in Tables \ref{table 1} as a part of the numerical proof for the stability of the reconstruction method.
\end{remark}

\begin{table}[tbp]
\caption{\label{table 1} \it Correct and computed maximal and minimal values of source functions. $p_{\rm true}$ means the correct source functions. $p_{\rm comp}$ indicates the computed source functions. ${\rm error}_{\rm rel}$ denotes the relative error.}
\begin{center}
\begin{tabular}{|c|c|c|c|c|c|}
\hline
Test id& noise level & $\min p_{\rm true}$& $\min p_{\rm comp}$ (${\rm error}_{\rm rel}$) &  $\max p_{\rm true}$ & $\max p_{\rm comp}$ (${\rm error}_{\rm rel}$)\\ 
\hline
1 & $2\%$ & -2.0 &-1.99 (0.5\%) &2 & 2.00 (0.0\%)\\
\hline
2 & $ 2\%$&-1.0&-1.03 (3.0\%)&1.5&1.46 (2.7\%)\\
\hline
3& $ 2\%$& -6.5&-5.84 (10.1\%) & 8.1 & 7.36 (9.1\%)\\
\hline
4 & $ 2\%$&-1.0 & -1.07 (7.0\%)&1&1.04(4.0\%)\\
%
%
\hline
1 & $ 5\%$ & -2.0 &-2.14 (7.0\%) &2 & 2.14 (7.0\%)\\
\hline
2 & $  5\%$&-1.0&-1.03 (3.0\%)&1.5&1.46 (2.7\%)\\
\hline
 3& $5\%$& -6.5&-5.53 (15.0\%) & 8.1 & 7.36 (9.1\%)\\
\hline
 4 & $5\%$&-1.0 & -1.07 (7.0\%)&1&1.10(10.0\%)\\
\hline
%
%
%
1& $10\%$ & -2.0 &-2.33 (16.5\%) &2 & 2.24 (12.0\%)\\
\hline
 2& $10\%$&-1.0&-1.10 (10.0\%)&1.5&1.56 (4.0\%)\\
\hline
3& $10\%$& -6.5&-6.24 (4.0\%) & 8.1 & 7.30 (9.9\%)\\
\hline
4& $10\%$&-1.0 & -1.23 (23.0\%)&1&1.24(24.0\%)\\
\hline
\end{tabular}%
\end{center}
\end{table}

\section{Concluding remarks} \label{sec rem}

We have established in this paper a robust numerical method to solve an
inverse source problem for hyperbolic equations using the lateral Cauchy
data. Our method consists of deriving an integro-differential equation
involving a Volttera-like integral and then solving it by the well-known
quasi-reversibility method. We have established the Lipschitz-like
convergence rate of regularized solutions. This result is an extension of
the known convergence result for quasi-reversibility method for the case when a pure hyperbolic partial differential equation
is in place without a Volterra integral in it. A Carleman estimate is
essential in the analysis. Accurate numerical results are obtained.

Studying the ISP in the cases when the data is measured only on a part of
the boundary of the domain under consideration is reserved for a near future
research. Moreover, also in the future research, we will extend this method
to study ISPs for parabolic and elliptic equations.

As mentioned in Section \ref{sec problem statement}, the governing equation
for the ISP in this paper is the linearization of a coefficient inverse
problem, which is highly nonlinear. The numerical method developed in this
paper, therefore, can be used as a refinement step in solving that severely
ill-posed and highly nonlinear problem. For example, one might refine
numerical results obtained by the convexification globally convergent
numerical method, see, e.g. \cite{KlibanovKolesov:cma2018, KlibanovAlex:SIAMjam:2017}.

%
%

\begin{center}
\textbf{Acknowledgements}
\end{center}

This work was partially supported by research funds FRG 111172 provided by
University of North Carolina at Charlotte and by US Army Research Laboratory and Office of Army Research grant W911NF-15-1-0233. 
The author is grateful to Michael Klibanov for many fruitful  discussions.

\bibliographystyle{siam}
\bibliography{mybib}
 
\end{document}